\renewcommand{\bot}{\perp}
\newtheorem{theorem}{Theorem}[section]
\newtheorem{corollary}[theorem]{Corollary}
\newtheorem{lemma}[theorem]{Lemma}
\newtheorem{proposition}[theorem]{Proposition}
\newtheorem{remark}[theorem]{Remark}
\newtheorem{example}[theorem]{Example}
\numberwithin{equation}{section}
\begin{document}
\title{Singular Riemannian flows and characteristic numbers}
\author{Igor Prokhorenkov}
\author{Ken Richardson}
\address{Department of Mathematics\\
Texas Christian University\\
Box 298900 \\
Fort Worth, Texas 76129}
\email{i.prokhorenkov@tcu.edu\\
k.richardson@tcu.edu}
\subjclass[2010]{57R20; 53C12; 57R30}
\keywords{singular Riemannian foliation, characteristic numbers, transverse
Killing vector field}
\date{December, 2017}

\begin{abstract}
Let $M$ be an even-dimensional, oriented closed manifold. We show that the
restriction of a singular Riemannian flow on $M$ to a small tubular neighborhood
of each connected component of its singular stratum is
foliated-diffeomorphic to an isometric flow on the same neighborhood. We
then prove a formula that computes characteristic numbers of $M$ as the sum
of residues associated to the infinitesimal foliation at the components of the singular
stratum of the flow.
\end{abstract}

\maketitle

\section{Introduction}

In \cite{BaumCh}, P. Baum and J. Cheeger give a formula for the
characteristic numbers of a manifold endowed with a singular isometric flow
in terms of integrals of characteristic forms over the singular stratum of
the flow. In our paper, we start with a singular Riemannian flow, not
assumed to be an isometric flow. Our first result describes the structure of
singular Riemannian flows near the singular stratum of the flow. Using this,
we obtain a formula similar to \cite{BaumCh} for the characteristic numbers
of the manifold.

The following definitions and facts about Riemannian foliations can be found
in \cite{Mo}. Let $\left( M,\mathcal{F}\right) $ be a singular foliation of
a smooth, connected, compact manifold $M$ that is smooth in the sense of
Sussman and Stefan (\cite{Stef}, \cite{Suss}). This means that for each leaf 
$L\in \mathcal{F}$, each $x\in L$, and each $v\in T_{x}L$, there exists a
smooth vector field $V$ on $M$ such that $V\left( x\right) =v$ and $V\left(
y\right) \in T_{y}\mathcal{F}$ for all $y\in M$. If, in addition, there is a
Riemannian metric $g$ on $M$ such that every geodesic that is perpendicular
at one point to a leaf is perpendicular to every leaf it meets, we say that
the triple $\left( M,\mathcal{F},g\right) $ is a \textbf{singular Riemannian
foliation}.

If all the leaves of $\mathcal{F}$ have the same dimension, $\mathcal{F}$ is
called regular. In this case, the condition on $g$ given above is equivalent
to $g$ being a bundle-like metric.

Let the stratum $\Sigma _{r}\subseteq M$ denote the union of leaves of
dimension $r$. Then the restriction of $\mathcal{F}$ and $g$ to each $\Sigma
_{r}$ is a Riemannian foliation with bundle-like metric. The stratum
corresponding to leaves of the smallest dimension is a compact submanifold,
called the \textbf{minimal stratum}. The stratum corresponding to leaves of
maximal dimension is open and dense in $M$ and is called the \textbf{regular
stratum}. The closures of the leaves of a singular Riemannian foliation are
submanifolds, and the restriction of $\mathcal{F}$ to one of these leaf
closures is a [transversally locally homogeneous] regular Riemannian
foliation.

A \textbf{singular Riemannian flow} is a singular Riemannian foliation such
that the maximal dimension of each leaf is one.

We say that a smooth vector field $X$ on a smooth manifold $M$ is a \textbf{%
transverse Killing vector field} if there exists a Riemannian metric on $M$
such that the singular flow generated by $X$ is a singular Riemannian flow.
If the zero set $\Sigma $ of $X$ is nondegenerate, meaning the normal
Hessian of $X$ is invertible at $\Sigma $, we say that $X$ is a \textbf{%
nondegenerate transverse Killing vector field}. One can always construct a
nondegenerate transverse Killing vector field corresponding to any oriented
singular Riemannian flow. We remark that in other sources the term
\textquotedblleft transverse Killing\textquotedblright\ implies a choice of
metric on the normal bundle to the foliation, but we do not specify this
metric in our definition.

We start by establishing the structure of an oriented singular Riemannian
flow $\left( M,\mathcal{F},g\right) $ in the tubular neighborhood of a
component of the singular stratum $\Sigma :=\Sigma _{0}$ in Theorem \ref%
{TubularRiemFlowIsIsometric} and Corollary \ref%
{isometricModificationCorollary}. These theorems resemble slice theorems
(such as in \cite{MolPier}, \cite{AlexPlus}, \cite{MenRad}), but the new
results in this paper are stronger for flows in that they apply to the entire tubular
neighborhood of a singular stratum rather than to the neighborhood of a
singular leaf. We show that there exists a new metric $g^{\prime }$ on $M$
for which $\left( M,\mathcal{F},g^{\prime }\right) $ is a singular
Riemannian flow on $M$ that restricts to an isometric flow on the tubular
neighborhood. Note that every vector field that generates an isometric flow
for some metric on $M$ is automatically a nondegenerate transverse Killing
vector field and thus generates a singular Riemannian flow. It is easy to
construct transverse Killing fields that are not global Killing vector
fields for any metric; equivalently, there are singular Riemannian flows
that are not foliated-diffeomorphic to singular isometric flows. See
Examples \ref{RP_example} and \ref{projective_example}. In addition, a
nonorientable flow is considered in Example \ref{nonorientable_example}. In
Section 3, we establish some technical results which help to localize
computations to the tubular neighborhood of $\Sigma $.

\vspace{0in}The main result of the paper is Theorem \ref{mainTheorem}. In
this theorem, we provide the formula that computes characteristic numbers of
an even-dimensional, oriented closed manifold as the sum of residues at the components of the
zero set of a nondegenerate transverse Killing vector field that generates a 
singular Riemannian flow.
We prove that the Lie derivative of the field induces an isometric flow on the normal bundle of each 
component of the singular stratum, and the residue at this component is defined in
terms of the invariants of this action.
In the case when the
singular Riemannian flow is not orientable, the argument is easily handled
by Theorem \ref{nonorientableCase}. These theorems specialize to the results
in \cite{BaumCh} in the case when the singular Riemannian flow is in fact a
global isometric flow for some metric. One simple consequence, Corollary \ref%
{EulerCor}, is the formula for the Euler characteristic,%
\begin{equation*}
\chi \left( M\right) =\sum_{j}\chi \left( \Sigma _{j}\right) ,
\end{equation*}%
where $\Sigma _{j}$ are the components of the singular stratum of a possibly
nonorientable Riemannian flow $\left( M,\mathcal{F},g\right) $. This formula
was previously known when $\Sigma _{j}$ are the zero sets of a Killing
vector field; see \cite{Kob1}. See also Corollary \ref{SignatureCor} for a
new formula for the signature of a manifold endowed with a singular
Riemannian flow whose singular stratum is a finite set of points.

We now briefly discuss the history of this problem. In the celebrated paper 
\cite{bott1}, R. Bott showed how to compute the Pontryagin and other
characteristic numbers from isolated singular points of holomorphic vector
fields or of infinitessimal isometries. In \cite{bott2}, he generalized his
result in the holomorphic case to allow vector fields whose zero sets are
submanifolds. In \cite{APS3}, M. Atiyah and I. Singer used the $G$-signature
theorem, a special case of the index theorem, to give the formula for the
characteristic numbers of a singular isometric flow in terms of integrals of
characteristic forms over the singular stratum of the flow. In \cite{BaumCh}%
, P. Baum and J. Cheeger use purely differential-geometric and Stokes'
theorem techniques to derive the same result. One consequence of all these
results is that if there exists a nonvanishing Killing vector field on a
closed Riemannian manifold, then all of its characteristic numbers vanish.
In \cite{Carr1.5}, Y. Carri\`{e}re showed that any Riemannian manifold with
a nonsingular Riemannian flow has Gromov minimal volume zero, so as a
consequence all of the characteristic numbers of that manifold are zero,
consistent with Theorem~\ref{mainTheorem}. In \cite{Mei}, X. Mei considered
a singular Riemannian foliation and a variant of curvature coming from the
curvature of the normal bundle to the foliation. The author gave a formula
for the residue of a characteristic polynomial of this type of curvature at
a connected component of the singular stratum. These are not the same as the
residues used to compute the characteristic numbers of the manifold, which
are computed in this paper.

\section{Structure of the foliation near the singular stratum}

Let $\left( M,\mathcal{F},g\right) $ be an oriented singular Riemannian flow
on a smooth, connected, compact manifold $M$, and let $\Sigma $ be the
singular stratum of $\mathcal{F}$; note that each singular leaf is a point
in $\Sigma $. In this section, we show that the metric can be modified to
another metric such that the flow is still a singular Riemannian flow and
now its restriction to a tubular neighborhood of $\Sigma $ is an isometric
flow.

\subsection{The flow on a normal disk}

By \cite[Proposition 6.3]{Mo}, the singular stratum $\Sigma $ of the flow $%
\mathcal{F}$ is an embedded submanifold of $M$, which may have more than one
connected component. Let $\exp ^{\bot }:N\Sigma \rightarrow M$ be the normal
exponential map for $N\Sigma \subseteq \left. TM\right\vert _{\Sigma }$. For
any submanifold $V\subseteq M$, let $\mathrm{Tub}_{\varepsilon }\left(
V\right) $ denote the set of points of distance $<\varepsilon $ from $V$ in $%
M$. We also let $N\left( V,\varepsilon \right) =\left( \exp ^{\bot }\right)
^{-1}\left( \mathrm{Tub}_{\varepsilon }\left( V\right) \right) \subseteq NV$%
. Fix $x\in \Sigma $. Consider the normal disk $D_{\varepsilon }\left(
x\right) =\exp ^{\bot }\left( N_{x}\Sigma \right) \cap \mathrm{Tub}%
_{\varepsilon }\left( \Sigma \right) $. We will show in this section that $%
\mathcal{F}$ restricts to a Riemannian flow on $D_{\varepsilon }\left(
x\right) $.

Let $B_{\varepsilon }\left( x\right) \subseteq M$ denote the ball of radius $%
\varepsilon $ around $x$, and so $N\left( x,\varepsilon \right) $ is the
subset of $T_{x}M$ consisting of vectors of length $<\varepsilon $. There
exists $\varepsilon >0$ such that the exponential map $\exp :N\left(
x,\varepsilon \right) \rightarrow B_{\varepsilon }\left( x\right) $ is a
diffeomorphism. For $0<\lambda \leq 1$ the homothetic transformation $%
h_{\lambda }:B_{\varepsilon }\left( x\right) \rightarrow B_{\lambda
\varepsilon }\left( x\right) $ is defined by $h_{\lambda }\left( \exp \left(
v_{y}\right) \right) =\exp \left( \lambda v_{y}\right) $. By the
distance-preserving property, the foliation restricts to each sphere of
radius $r$ centered at $x$ for $0<r<\varepsilon $; let $\mathcal{F}%
_{B_{\varepsilon }}=\left. \mathcal{F}\right\vert _{B_{\varepsilon }\left(
x\right) }$.

\begin{lemma}
(Special case of the \textbf{homothetic transformation lemma}, \cite[Section
6.2]{Mo}) For sufficiently small $\varepsilon >0$, the homothetic
transformation $h_{\lambda }$ maps $\mathcal{F}_{B_{\varepsilon }}$ to $%
\mathcal{F}_{B_{\lambda \varepsilon }}$.
\end{lemma}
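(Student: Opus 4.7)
My plan is to reduce the lemma to showing that for every smooth vector field $V$ on $M$ tangent to $\mathcal{F}$, the pushforward $(h_\lambda)_* V$ is again tangent to $\mathcal{F}$ on $B_{\lambda\varepsilon}(x)$. Since $\mathcal{F}$ is Stefan-Sussmann generated by its tangent vector fields and $h_\lambda$ is a diffeomorphism, preservation of this generating module forces $h_\lambda$ to permute the corresponding integral submanifolds, so $h_\lambda$ will send $\mathcal{F}_{B_\varepsilon}$ onto $\mathcal{F}_{B_{\lambda\varepsilon}}$.

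I would work in exponential normal coordinates at $x$ and identify $B_\varepsilon(x)$ with the open $\varepsilon$-ball in $T_xM$. Two things are then immediate: first, because $\{x\}$ is a singular leaf of dimension $0$, every $\mathcal{F}$-tangent vector field satisfies $V(x)=0$; second, the remark preceding the lemma shows that $V$ is pointwise tangent to every geodesic sphere $S_r(x)$. Since $dh_\lambda$ is multiplication by $\lambda$ in these coordinates, it carries sphere-tangent vectors at $r\theta$ to sphere-tangent vectors at $\lambda r\theta$, and so $(h_\lambda)_* V$ is at least tangent to the family $\{S_r(x)\}$.

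The heart of the proof is to upgrade \emph{tangent to spheres} to \emph{tangent to leaves}. To do this I would examine the linearization $DV_x\colon T_xM\to T_xM$. Because the flow of $V$ preserves the distance function $d(\cdot,x)$, the endomorphism $DV_x$ is skew-symmetric with respect to $g|_x$, and the associated linear vector field $\widetilde V(v):=DV_x(v)$ on $T_xM$ generates a one-parameter group of linear isometries; as a linear vector field it is manifestly invariant under the Euler scaling $v\mapsto\lambda v$, hence under $h_\lambda$. Writing $V=\widetilde V+W$ with $W$ vanishing to second order at $x$, one needs to see that $\widetilde V$ is itself a foliation-tangent generator (representing the infinitesimal foliation of $\mathcal{F}$ at $x$) and that $(h_\lambda)_* W$ still lies in the tangent module of $\mathcal{F}$. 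The second point uses the fact, specific to singular \emph{Riemannian} foliations near a zero-dimensional leaf, that the germ of $\mathcal{F}$ at $x$ coincides with its infinitesimal linearization modulo a homothety-compatible change of variables.

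The main obstacle is precisely this control of the nonlinear remainder: each homogeneous Taylor component $W_k$ of degree $k\geq 2$ is rescaled by $(h_\lambda)_*$ by a factor $\lambda^{1-k}$, so the fact that the resulting vector field still lies in the tangent module of $\mathcal{F}$ is not formal. This is where the full singular Riemannian hypothesis is used, and the cleanest way to organize the argument is through the local linearization and slice-theorem package for singular Riemannian foliations developed in \cite[Section 6.2]{Mo}.
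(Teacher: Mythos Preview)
The paper does not give its own proof of this lemma; it is quoted as an input from \cite[Section 6.2]{Mo}. Your proposal, however, is not a self-contained argument, and its closing step is circular. You correctly isolate the obstruction in the higher-order Taylor remainder $W=V-\widetilde V$ and correctly note that $(h_\lambda)_*$ rescales the degree-$k$ piece by $\lambda^{1-k}$, which is \emph{large} for $\lambda<1$ and $k\ge 2$. You then propose to absorb this by invoking ``the local linearization and slice-theorem package \dots\ in \cite[Section 6.2]{Mo}.'' But that package \emph{begins} with the homothetic transformation lemma: Molino uses homothety invariance to define the infinitesimal foliation at $x$ and to identify it, via $\exp_x$, with the germ of $\mathcal{F}$. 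Appealing to that identification to argue that $(h_\lambda)_*W$ remains $\mathcal{F}$-tangent, or that the linear part $\widetilde V=DV_x$ is already $\mathcal{F}$-tangent, assumes the conclusion you are trying to establish.

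The direct argument in \cite{Mo} is geometric and does not pass through Taylor expansions of generating vector fields. One shows that if $p,q\in S_r(x)$ lie in the same plaque, then $h_\lambda(p)$ and $h_\lambda(q)$ lie in the same plaque of $S_{\lambda r}(x)$: the radial geodesics from $x$ through $p$ and through $q$ are perpendicular to every leaf they meet (the transnormality defining a singular Riemannian foliation), and sliding the plaque of $L_p$ along this family of horizontal geodesics lands in a single leaf by the local equidistance of leaves on the regular part. The unfavourable scaling of $W$ never enters, because no decomposition $V=\widetilde V+W$ is made.
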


Let $S\left( \Sigma ,r\right) \overset{p_{r}}{\rightarrow }\Sigma $ denote
the fiber bundle of spheres of radius $r>0$ in $N\Sigma $. Let $S_{r}\Sigma
=\exp ^{\bot }\left( S\left( \Sigma ,r\right) \right) $. For sufficiently
small $r$, $S_{r}\Sigma $ is an embedded submanifold such that the
restriction $\left( S_{r}\Sigma ,\left. \mathcal{F}\right\vert _{S_{r}\Sigma
},\left. g\right\vert _{S_{r}\Sigma }\right) $ is a nonsingular Riemannian
flow (\cite[Section 6.2]{Mo}). Let $\pi _{r}=p_{r}\circ \left( \exp ^{\bot
}\right) ^{-1}:S_{r}\Sigma \rightarrow \Sigma $ be the sphere bundle
projection.

\begin{lemma}
For sufficiently small $r$ and each $x\in \Sigma $, the singular Riemannian
foliation $\mathcal{F}\ $restricts to the submanifold $\pi _{r}^{-1}\left(
x\right) $, and $\left. \mathcal{F}\right\vert _{\pi _{r}^{-1}\left(
x\right) }$ is a nonsingular Riemannian flow.\label{SphereLemma}

\begin{proof}
For $x\in \Sigma $, let $V_{r}\left( x\right) $ be the set of points of
distance $r$ from $x$ in $M$, where $r$ is sufficiently small. Then the
Riemannian flow $\left( M\setminus \Sigma ,\left. \mathcal{F}\right\vert
_{M\setminus \Sigma }\right) $ restricts to a Riemannian flow on $%
V_{r}\left( x\right) $, since the geodesics through $x$ must remain
orthogonal to every point of each fixed leaf of $\left( M\setminus \Sigma
,\left. \mathcal{F}\right\vert _{M\setminus \Sigma }\right) $. Also, by \cite%
[Sections 6.2 through 6.4]{Mo}, $\mathcal{F}$ restricts to a nonsingular
Riemannian flow on $S_{r}\Sigma $. Thus, the Riemannian flow must also
restrict to $V_{r}\left( x\right) \cap S_{r}\Sigma $, which is $\pi
_{r}^{-1}\left( x\right) $, because of the following argument. Clearly, $\pi
_{r}^{-1}\left( x\right) \subseteq V_{r}\left( x\right) $, since all of its
points are a distance $r$ from $x$, and likewise $\pi _{r}^{-1}\left(
x\right) \subseteq S_{r}\Sigma $ by construction. Since $\pi _{r}^{-1}\left(
x\right) $ and $V_{r}\left( x\right) \cap S_{r}\Sigma $ are smooth spheres
and of the same dimension for small $r$, we have $V_{r}\left( x\right) \cap
S_{r}\Sigma =\pi _{r}^{-1}\left( x\right) $.
\end{proof}
\end{lemma}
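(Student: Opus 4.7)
My plan is to realize $\pi_r^{-1}(x)$ as the intersection of two $\mathcal{F}$-saturated submanifolds, namely the geodesic sphere $V_r(x)$ of radius $r$ around $x$ and the tubular sphere $S_r\Sigma$.

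First I would verify the set identity $\pi_r^{-1}(x) = V_r(x) \cap S_r\Sigma$. The forward inclusion is immediate: a point $y = \exp^\perp(v)$ with $v\in N_x\Sigma$, $|v|=r$, lies on $S_r\Sigma$ by construction and is joined to $x$ by the length-$r$ normal geodesic. Conversely, choose $r$ below both the focal radius of $\Sigma$ and the injectivity radius of $M$ at $x$. Then any $y\in S_r\Sigma$ satisfies $d(y,\Sigma)=r$, attained by a unique perpendicular geodesic emanating from $\Sigma$; if in addition $d(y,x)=r$, this perpendicular geodesic must originate at $x$, forcing $y\in\pi_r^{-1}(x)$.

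Next I would check saturation of each factor. The saturation of $S_r\Sigma$ and its Riemannian flow structure are cited from \cite[Section 6.2]{Mo}. For $V_r(x)$: since $\{x\}$ is a zero-dimensional leaf of $\mathcal{F}$, every geodesic through $x$ is trivially perpendicular to this leaf at $x$, and the defining property of a singular Riemannian foliation then forces the geodesic to remain perpendicular to every leaf it subsequently meets. Hence on $B_\varepsilon(x)\setminus\{x\}$, for $\varepsilon$ below the injectivity radius at $x$, the smooth function $d(x,\cdot)$ has unit radial gradient orthogonal to every leaf, so it is locally constant along leaves. A clopen argument on any connected leaf $L$ meeting $V_r(x)$ for $r<\varepsilon$ finishes the saturation of $V_r(x)$: the set $\{z\in L : d(x,z)=r\}$ is nonempty by assumption, closed by continuity, and open by local constancy, so it equals $L$.

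Intersecting, $\pi_r^{-1}(x)$ is $\mathcal{F}$-saturated. Since no point of $\pi_r^{-1}(x)$ lies in $\Sigma$, each leaf of the restriction is one-dimensional, making the restricted flow nonsingular. The restricted metric is bundle-like because bundle-like metrics descend to $\mathcal{F}$-saturated embedded submanifolds: the orthogonal splitting $TM = T\mathcal{F}\oplus T\mathcal{F}^\perp$ and the associated transverse metric restrict coherently to $\pi_r^{-1}(x)$. The main obstacle is globalizing step two --- making the local constancy of $d(x,\cdot)$ along leaves persist along entire leaves --- and this is handled by the clopen argument. The threshold for $r$ is uniform in $x$ by compactness of $\Sigma$.
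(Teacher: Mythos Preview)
Your proof is correct and follows essentially the same strategy as the paper: realize $\pi_r^{-1}(x)$ as the intersection $V_r(x)\cap S_r\Sigma$, with each factor $\mathcal{F}$-saturated via, respectively, the transverse-geodesic property of singular Riemannian foliations and Molino's results. The only notable difference is that you establish the reverse inclusion $V_r(x)\cap S_r\Sigma\subseteq\pi_r^{-1}(x)$ by uniqueness of the nearest point in $\Sigma$, whereas the paper argues that both sides are spheres of the same dimension; your argument is slightly more direct.
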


\subsection{Infinitesimal flow on the normal bundle to the singular stratum}

We will replace the metric on $\mathrm{Tub}_{\varepsilon }\left( \Sigma
\right) $ with a linearized metric, as follows. Let $g^{\Sigma }$ be the
original metric restricted to $T\Sigma $, and let $\widetilde{g^{\Sigma }}$
be the pullback of $g^{\Sigma }$ to the horizontal space $\mathcal{H}\subset
T\left( N\left( \Sigma ,\varepsilon \right) \right) \subset T\left( N\Sigma
\right) $ given by the normal Levi-Civita connection. Let $g^{\bot }$ be the
metric on $N_{y}\Sigma $ for each $y\in \Sigma $, and let $\widetilde{%
g^{\bot }}$ be the corresponding translation-invariant metric on the fibers
of $N\Sigma \rightarrow \Sigma $. We define the metric on the total space of 
$N\left( \Sigma ,\varepsilon \right) $ as $\widetilde{g^{\Sigma }}\oplus 
\widetilde{g^{\bot }}$. We may then transplant this metric to $\mathrm{Tub}%
_{\varepsilon }\left( U\right) $ via $\left( \exp ^{\bot }\right) _{\ast }$.
We call the resulting metric the \textbf{linearized metric} $g_{L}$ on $%
\mathrm{Tub}_{\varepsilon }\left( U\right) $. By the results of \cite[%
Section 6.4, applied to $\Sigma $]{Mo}, $\left( \mathrm{Tub}_{\varepsilon
}\left( U\right) ,\left. \mathcal{F}\right\vert _{\mathrm{Tub}_{\varepsilon
}\left( U\right) },g_{L}\right) $ is a singular Riemannian foliation, as is $%
\left( N\left( \Sigma ,\varepsilon \right) ,\left( \exp ^{\bot }\right)
^{-1}\left( \left. \mathcal{F}\right\vert _{\mathrm{Tub}_{\varepsilon
}\left( U\right) }\right) ,\widetilde{g^{\Sigma }}\oplus \widetilde{g^{\bot }%
}\right) .$ We remark that since the foliation is $1$-dimensional away from $%
\Sigma $, the exponential map takes the leaves of the infinitesimal
foliation onto the leaves of $\mathcal{F}$. Further, the foliation is
invariant under homothetic transformations with respect to the stratum $%
\Sigma $.

\begin{lemma}
\label{isometricDiskLemma}For each $y\in \Sigma $, the restriction of $%
\mathcal{F}$ to $\exp ^{\bot }\left( N_{y}\Sigma \cap N\left( \Sigma
,\varepsilon \right) \right) $ is an isometric flow in the linearized
metric. Similarly, the restriction of $\left( \exp ^{\bot }\right)
^{-1}\left( \left. \mathcal{F}\right\vert _{\mathrm{Tub}_{\varepsilon
}\left( U\right) }\right) $ to $N_{y}\Sigma \cap N\left( \Sigma ,\varepsilon
\right) $ is also an isometric flow.
\end{lemma}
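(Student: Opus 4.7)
My plan is to reduce the statement to a flat-Euclidean problem on the fiber, produce a skew-symmetric endomorphism $A$ of $N_{y}\Sigma$ whose associated linear vector field generates the restricted foliation, and conclude from $e^{tA}\in SO(N_{y}\Sigma)$. By construction $\exp^{\bot}$ is an isometry from $(N(\Sigma,\varepsilon),\widetilde{g^{\Sigma}}\oplus\widetilde{g^{\bot}})$ to $(\mathrm{Tub}_{\varepsilon}(U),g_{L})$, and its restriction to $N_{y}\Sigma\cap N(\Sigma,\varepsilon)$ is an isometry onto $D_{\varepsilon}(y)$. Hence the two assertions in the lemma are equivalent, and it suffices to show that the pulled-back foliation on the flat ball $(N_{y}\Sigma\cap N(\Sigma,\varepsilon),\widetilde{g^{\bot}}|_{N_{y}\Sigma})$ is an isometric flow, with the origin as its only singular leaf.

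To produce $A$, I would take a nondegenerate transverse Killing vector field $X$ on $M$ whose flow is $\mathcal{F}$ (whose existence was noted in the introduction), transport it by $(\exp^{\bot})^{-1}_{\ast}$ to $N(\Sigma,\varepsilon)$, and restrict to the fiber to obtain a smooth vector field $Y$ on $N_{y}\Sigma\cap N(\Sigma,\varepsilon)$ vanishing only at the origin. By Lemma~\ref{SphereLemma} each leaf sits in a centered sphere, so $\langle Y(v),v\rangle = 0$ pointwise, where $\langle\cdot,\cdot\rangle = \widetilde{g^{\bot}}|_{N_{y}\Sigma}$. Setting $A := DY(0)$ and Taylor-expanding $Y(v) = Av + O(|v|^{2})$, the identity $\langle Y(v),v\rangle = 0$ forces $\langle Av, v\rangle = 0$, so $A$ is skew-symmetric.

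The main work is to show that $Av$ is everywhere parallel to $Y(v)$ on $N_{y}\Sigma\setminus\{0\}$, so that $v\mapsto Av$ generates the same singular foliation as $Y$. Here I would exploit the homothety invariance of the infinitesimal foliation stated just before the lemma: $h_{\lambda}(v) = \lambda v$ sends leaves to leaves, so $\lambda Y(v) = (h_{\lambda})_{\ast}Y(v)$ must be parallel to $Y(\lambda v)$. Writing $Y(\lambda v) = c(\lambda, v)\, Y(v)$ for a smooth scalar $c$ (defined wherever $Y(v)\neq 0$, i.e.\ everywhere off the origin), dividing by $\lambda$, and passing to the limit $\lambda\to 0$ in the Taylor expansion forces $Av = \mu(v)\, Y(v)$ at each $v\neq 0$. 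Nondegeneracy of the normal Hessian of $X$ at $y$ guarantees that $A$ is invertible on $N_{y}\Sigma$, so $v\mapsto Av$ also vanishes only at the origin and has the same integral curves as $Y$ up to reparameterization.

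The delicate point is precisely the limit argument tying $A$ to $Y$ through the homothety identity, and carefully handling the scalar factor $c(\lambda, v)$; once parallelism is established, $v\mapsto Av$ is a Killing field for $\widetilde{g^{\bot}}|_{N_{y}\Sigma}$ because $A$ is skew-symmetric, its time-$t$ flow lies in $SO(N_{y}\Sigma)$, and the restricted infinitesimal flow is isometric. Transporting back via the isometry $\exp^{\bot}$ then yields the isometric flow on $(D_{\varepsilon}(y), g_{L})$.
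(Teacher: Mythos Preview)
Your approach differs substantially from the paper's. The paper first argues, by a limiting comparison with small geodesic spheres around $y$, that the induced foliation $\mathcal{F}_S$ on the unit round sphere $S_yD\subset T_yD$ is a Riemannian flow; it then invokes the Gromoll--Grove classification of one-dimensional metric foliations on round spheres to conclude that $\mathcal{F}_S$ arises from an isometric flow, and homothety invariance carries this to the ball. You instead linearize a tangent vector field at the origin and use homothety to identify the foliation with the orbit foliation of the linearization $A$, which is appealing because it sidesteps the Gromoll--Grove black box entirely.

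The gap is your reliance on a \emph{nondegenerate} transverse Killing field $X$. The introduction asserts that such an $X$ always exists, but no independent proof is given there; the natural construction (take the linear generator on each normal fiber and patch) presupposes exactly the local isometric structure that this lemma and Theorem~\ref{TubularRiemFlowIsIsometric} are establishing, so invoking it here is circular. Without nondegeneracy your homothety-and-limit step yields only $Av\parallel Y(v)$, which is vacuous on $\ker A$: if $A=DY(0)$ happens to be singular you have not identified the leaf direction at any $v\in\ker A\setminus\{0\}$, and you cannot conclude that the linear field $v\mapsto Av$ generates the given foliation. To make your route self-contained you would need an a~priori argument that \emph{some} smooth vector field tangent to the restricted foliation on the fiber has invertible differential at the origin; absent that, the paper's appeal to Gromoll--Grove is doing essential work that your linearization does not replace.
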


\begin{proof}
For fixed $y\in \Sigma $, by Lemma \ref{SphereLemma}, the restriction of $%
\mathcal{F}$ to $D=\exp ^{\bot }\left( N_{y}\Sigma \cap N\left( \Sigma
,\varepsilon \right) \right) $ is a Riemannian flow (for the original
metric) on a metric ball of radius $\varepsilon $ and centered at $y$, so
that the spheres at each radius $r$ with $0<r<\varepsilon $ are foliated by $%
\mathcal{F}$, and such that the homothetic transformation $r\mapsto
r^{\prime }$ preserves the foliation (\cite[Section 6.2]{Mo}). The foliation
induces a foliation of the unit sphere $S_{y}D$ in the tangent space $T_{y}D$%
, where $v,w$ are on the same leaf if and only if $\exp _{y}\left( tv\right) 
$ and $\exp _{y}\left( tw\right) $ are on the same leaf for a fixed $t\in
\left( 0,\varepsilon \right) $ [and hence for all $t\in \left( 0,\varepsilon
\right) $ by the homothetic transformation property]. We claim that the
induced foliation $\mathcal{F}_{S}$ on $S_{y}D$ with the Euclidean metric is
a Riemannian flow. If not, then there exist two nearby leaves of a foliation
chart of $\mathcal{F}_{S}$ that are not locally equidistant in the Euclidean
metric on $S_{y}D$. Then it follows that the corresponding leaves of $%
\mathcal{F}$ close to the origin $y$ are also not locally equidistant on a
foliation chart of the metric sphere of radius $t$ for small $t$. This is a
contradiction. Then, since $\mathcal{F}_{S}$ is a Riemannian flow on the
round sphere $S_{y}D$, by \cite{GrGr}, the foliation $\mathcal{F}_{S}$ is a
foliation arising from an isometric flow.
\end{proof}

\subsection{The structure of the tubular neighborhood\label%
{StrTubNbhdSection}}

The restriction of the normal exponential map $\exp ^{\bot }:N\left( \Sigma
,\varepsilon \right) \rightarrow \mathrm{Tub}_{\varepsilon }\left( \Sigma
\right) $ is a diffeomorphism. We have seen in the previous section and in
Lemma \ref{isometricDiskLemma} that the oriented singular Riemannian flow is
conjugate through $\exp ^{\bot }$ to a singular Riemannian foliation on $%
N\left( \Sigma ,\varepsilon \right) $ such that the restriction of the flow
to each fiber $N_{y}\Sigma \cap N\left( \Sigma ,\varepsilon \right) $ is a
linear isometric flow whose only fixed point is the origin. Thus the normal
bundle has even rank.

More specifically (see \cite{GrGr}), each such flow on a single fiber $%
N_{y}\Sigma $ of the normal bundle is conjugate to a linear isometric flow
on $\mathbb{C}^{k}$ with no fixed points other than the origin, where $2k$
is dimension of $N_{y}\Sigma $. Thus, by the representation theory of
isometric actions, there exists an isomorphism $N_{y}\Sigma \cong \mathbb{C}%
^{k}$ such that the isometric flow $\gamma :\mathbb{R}\rightarrow U\left( k,%
\mathbb{C}\right) $ has the form 
\begin{equation}
\gamma \left( t\right) =\left( 
\begin{array}{cccc}
\exp \left( i\alpha _{1}t\right) & 0 & 0 & 0 \\ 
0 & \exp \left( i\alpha _{2}t\right) & \vdots & \vdots \\ 
\vdots & 0 & \ddots & 0 \\ 
0 & ... & 0 & \exp \left( i\alpha _{k}t\right)%
\end{array}%
\right)  \label{formOfIsometricFlow}
\end{equation}%
acting on $z\in \mathbb{C}^{k}$, where $\alpha _{1}\leq \alpha _{2}\leq
...\leq \alpha _{k}$ are nonzero real constants. We select the constants so
that $\alpha _{1}^{2}+...+\alpha _{k}^{2}=1$. A family of such flows, in
other words the flow on the total space $N\Sigma $ dependent on the base
point $y\in \Sigma $, may move the planes $P_{i}=\left\{ z:z_{j}=\text{%
constant for }j\neq i\right\} $ but we claim must only multiply the $\left(
\alpha _{1},\alpha \,_{2},...,\alpha _{k}\right) $ by a scalar. Because of
the normalization, the scalar must be $\pm 1$, so the foliation may only
change its orientation. We now prove this claim in the following lemma,
proposition, and corollary.

\begin{lemma}
Let $\gamma :\mathbb{R}\rightarrow U\left( k,\mathbb{C}\right) $ be a family
as above, which depends on the choice of $\left( \alpha _{1},...,\alpha
_{k}\right) \in S^{k-1}$. Let $C\left( z\right) =C\left(
z_{1},...,z_{k}\right) $ denote the real dimension of the closure of the set 
$\left\{ \gamma \left( t\right) z:t\in \mathbb{R}\right\} $ in $%
S^{2k-1}\subseteq \mathbb{C}^{k}$. Then the maximum value of $C\left(
z\right) $ for $z$ in any open set is $\dim _{\mathbb{Q}}\left\{ \alpha
_{1},...,\alpha _{k}\right\} =k-\dim _{\mathbb{Q}}\left\{ \beta \in \mathbb{Q%
}^{k}:\beta \cdot \alpha =0\right\} $.

\begin{proof}
This is well--known and generalizes the Kronecker foliation.
\end{proof}
\end{lemma}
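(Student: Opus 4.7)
The plan is to reduce the statement to a Kronecker-type density result on a torus. First, I would observe that since each diagonal entry $\exp(i\alpha_j t)$ of $\gamma(t)$ preserves $|z_j|$, the orbit $\{\gamma(t)z : t \in \mathbb{R}\}$ is confined to the torus
\[
T_r \;=\; \{(r_1 e^{i\phi_1},\dots,r_k e^{i\phi_k}) : \phi_j \in \mathbb{R}\},
\]
where $r_j=|z_j|$. The open dense set $U \subseteq S^{2k-1}$ where every $r_j>0$ is precisely where $T_r$ is a $k$-dimensional torus, so it suffices to compute $C(z)$ for $z\in U$ and show this value is the dimension claimed; on the complementary strata (some $r_j=0$) the ambient torus is of lower dimension, so $C(z)$ can only be smaller, confirming the maximum.

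Next, for $z\in U$ I would parametrize $T_r$ by $(\phi_1,\dots,\phi_k)\in \mathbb{R}^k/(2\pi\mathbb{Z})^k$. Under this identification, the action of $\gamma(t)$ becomes the translation $\phi \mapsto \phi + t\alpha \pmod{2\pi}$, i.e.\ a linear one-parameter subgroup of $T^k$. The orbit closure is therefore a coset of the closure $H$ of the one-parameter subgroup $\{t\alpha \bmod 2\pi : t \in \mathbb{R}\}$, so $C(z) = \dim H$ for all $z \in U$.

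Now I would invoke the classical Pontryagin duality between closed subgroups of $T^k$ and sublattices of the character lattice $\mathbb{Z}^k$: for a closed subgroup $H \subseteq T^k$, $\dim H = k - \mathrm{rank}\,(H^\perp)$, where $H^\perp = \{\beta \in \mathbb{Z}^k : \beta\cdot \phi \equiv 0 \pmod{2\pi}\text{ for all }\phi \in H\}$. Since $H$ is the closure of $\{t\alpha \bmod 2\pi\}$, a character $\beta \in \mathbb{Z}^k$ vanishes on $H$ if and only if $e^{i t(\beta\cdot\alpha)} = 1$ for all $t \in \mathbb{R}$, equivalently $\beta\cdot\alpha=0$. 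Hence $H^\perp = \{\beta \in \mathbb{Z}^k : \beta\cdot\alpha = 0\}$, whose $\mathbb{Z}$-rank equals $\dim_{\mathbb{Q}}\{\beta \in \mathbb{Q}^k : \beta\cdot \alpha = 0\}$.

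Finally, the equality
\[
\dim_{\mathbb{Q}}\{\alpha_1,\dots,\alpha_k\} \;=\; k - \dim_{\mathbb{Q}}\{\beta \in \mathbb{Q}^k : \beta\cdot\alpha=0\}
\]
is the rank-nullity theorem for the $\mathbb{Q}$-linear map $\mathbb{Q}^k \to \mathbb{R}$, $\beta \mapsto \beta\cdot\alpha$, whose image has dimension $\dim_{\mathbb{Q}}\mathrm{span}_{\mathbb{Q}}\{\alpha_1,\dots,\alpha_k\}$ and whose kernel is the annihilator above. The only conceptual point that needs care (and is the main obstacle beyond bookkeeping) is the duality step identifying the dimension of the closure of a linear one-parameter subgroup of $T^k$ with the corank of its rational annihilator; this is standard and, as the authors note, amounts to the Kronecker equidistribution theorem, which I would simply cite.
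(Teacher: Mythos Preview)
Your argument is correct and is precisely the Kronecker-type reasoning the paper alludes to; since the paper merely asserts that the result ``is well--known and generalizes the Kronecker foliation,'' your proof is simply a fleshed-out version of the same approach.
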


\begin{proposition}
Let $I$ be any closed interval of positive length, and let $\alpha
:I\rightarrow S^{k-1}$, with $\alpha \left( s\right) =\left( \alpha
_{1}\left( s\right) ,...,\alpha _{k}\left( s\right) \right) $, be a
continuous, nonconstant path. Define $D_{\alpha }:I\rightarrow \mathbb{Z}$
by $D_{\alpha }\left( s\right) =\dim _{\mathbb{Q}}\left\{ \beta \in \mathbb{Q%
}^{k}:\beta \cdot \alpha \left( s\right) =0\right\} $ for $s\in I$. Then $%
D_{\alpha }$ is not constant.
\end{proposition}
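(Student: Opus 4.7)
My plan is to prove the contrapositive: if $D_\alpha(s) = d$ is constant on $I$, then $\alpha$ must be constant on $I$, contradicting the hypothesis.

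First I will partition $I$ by rational subspaces. For each rational subspace $L \subseteq \mathbb{Q}^k$ of $\mathbb{Q}$-dimension $k-d$, set $C_L = \alpha^{-1}(L \otimes_\mathbb{Q} \mathbb{R})$; this is closed in $I$ by continuity of $\alpha$, and the family $\{C_L\}$ is countable. I will show $I = \bigsqcup_L C_L$ as a \emph{disjoint} union. For covering: at each $s \in I$ the annihilator $W(s) = \{\beta \in \mathbb{Q}^k : \beta \cdot \alpha(s) = 0\}$ has $\mathbb{Q}$-dimension $d$ by hypothesis, so the rational subspace $L(s) = W(s)^\perp \cap \mathbb{Q}^k$ has dimension $k-d$ and contains $\alpha(s)$ in its real span. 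For disjointness: if $\alpha(s) \in (L \cap L') \otimes_\mathbb{Q} \mathbb{R}$ for two distinct $(k-d)$-dimensional rational subspaces $L \neq L'$, then $L \cap L'$ is rational of dimension strictly less than $k-d$, which forces $\dim_\mathbb{Q}\,\mathrm{span}\{\alpha_i(s)\} < k-d$ and hence $D_\alpha(s) > d$, contradicting the hypothesis.

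Next I will apply Sierpi\'nski's theorem: a continuum cannot be expressed as a countable disjoint union of more than one nonempty closed subset. Since the closed interval $I$ is a continuum, exactly one of the $C_L$ is nonempty, and it equals all of $I$. Let $L_0$ be that rational subspace; then $\alpha(I) \subseteq L_0 \otimes_\mathbb{Q} \mathbb{R}$. Finally I will apply the intermediate value theorem. Suppose $\alpha(s_1) \neq \alpha(s_2)$ for some $s_1 < s_2$ in $I$. Both vectors lie in $L_0 \otimes \mathbb{R}$, so the open subset $\{\beta \in L_0 \otimes \mathbb{R} : \beta \cdot \alpha(s_1) > 0 > \beta \cdot \alpha(s_2)\}$ of $L_0 \otimes \mathbb{R}$ is nonempty (it is the nonempty wedge cut out by two linear functionals that are not nonnegatively proportional, since both values are unit vectors). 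Because $L_0 \cap \mathbb{Q}^k$ is dense in $L_0 \otimes \mathbb{R}$, this open set contains a rational $\beta$. By IVT there exists $s_0 \in (s_1, s_2)$ with $\beta \cdot \alpha(s_0) = 0$, so $\beta \in W(s_0)$. But $L_0^\perp \cap \mathbb{Q}^k$ already sits inside $W(s_0)$ and has dimension $d = \dim_\mathbb{Q} W(s_0)$, so $W(s_0) = L_0^\perp \cap \mathbb{Q}^k$. Thus $\beta \cdot x = 0$ for every $x \in L_0 \otimes \mathbb{R}$; in particular $\beta \cdot \alpha(s_1) = 0$, contradicting $\beta \cdot \alpha(s_1) > 0$. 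Hence $\alpha$ is constant on $I$.

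The main obstacle is the Sierpi\'nski step, both in justifying the disjointness that makes it applicable and in having it available to the reader. If Sierpi\'nski's theorem is to be avoided, one can substitute a pure Baire-category argument to obtain a subinterval $J \subseteq I$ on which $\alpha(J) \subseteq L_0 \otimes \mathbb{R}$, apply the IVT step to show $\alpha$ is constant on $J$, and then propagate constancy across $I$ via a supremum/continuity argument on the set where $\alpha$ equals its value at an endpoint of $J$; the Sierpi\'nski route is shorter and more transparent.
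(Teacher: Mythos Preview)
Your proof is correct and takes a genuinely different route from the paper's. The paper proceeds by induction on $k$: it enumerates $\mathbb{Q}^{k}\setminus\{0\}$ and builds a nested sequence of subintervals on which $\alpha$ avoids each rational hyperplane in turn; either at some stage $\alpha$ falls into a lower-dimensional sphere (so the inductive hypothesis applies), or the nested intersection yields a point $x$ with $D_\alpha(x)=0$, after which an IVT argument with a single rational $\beta$ shows $D_\alpha$ cannot be identically $0$. Your argument replaces this inductive nested-interval construction with a single structural observation: under the hypothesis $D_\alpha\equiv d$, the sets $C_L=\alpha^{-1}(L\otimes\mathbb{R})$ for $(k{-}d)$-dimensional rational $L$ form a countable \emph{disjoint} closed cover of $I$, so Sierpi\'nski's theorem forces $\alpha(I)$ into a single $L_0\otimes\mathbb{R}$; the final IVT step is then essentially the same as the paper's. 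Your approach is shorter and non-inductive, and the disjointness argument (via $\dim_\mathbb{Q}\operatorname{span}\{\alpha_i(s)\}$) is clean; the price is the appeal to Sierpi\'nski, while the paper's argument is entirely elementary. Your suggested Baire-category alternative is in fact close in spirit to the paper's nested-interval construction, which is a hands-on Baire argument in disguise.
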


\begin{proof}
We induct on $k$.\footnote{%
The idea of proof was suggested to us by G. Gilbert.} If $k=1$, the
statement is true vacuously. Now, suppose that for some $i\geq 1$, the
statement is true for $1\leq k\leq i$. Now let $k=i+1$, so that $\alpha
:I\rightarrow S^{i}$ is a continuous, nonconstant path. Let $\left[
a_{0},b_{0}\right] =I$, and we enumerate $\mathbb{Q}^{i+1}\setminus \left\{
0\right\} =\left\{ \beta _{1},\beta _{2},...\right\} $. We construct a
nested sequence of intervals $\left[ a_{n},b_{n}\right] _{n\geq 1}$ such
that $0\leq a_{n}<b_{n}\leq 1$, $\left. \alpha \right\vert _{\left[
a_{n},b_{n}\right] }$ is not constant, and $\beta _{j}\cdot \alpha \left(
s\right) \neq 0$ for all $s\in \left[ a_{n},b_{n}\right] $ and all $1\leq
j\leq n$. To construct $\left[ a_{n},b_{n}\right] $ from $\left[
a_{n-1},b_{n-1}\right] $, let $U_{n}=\left\{ s\in \left[ a_{n-1},b_{n-1}%
\right] :\beta _{n}\cdot \alpha \left( s\right) \neq 0\right\} $. The set $%
U_{n}$ is open in $\left[ a_{n-1},b_{n-1}\right] $. If $U_{n}=\left[
a_{n-1},b_{n-1}\right] $, we let $\left[ a_{n},b_{n}\right] =\left[
a_{n-1},b_{n-1}\right] $ and continue. If $U_{n}=\emptyset $, then $\left.
\alpha \right\vert _{\left[ a_{n-1},b_{n-1}\right] }$ is a nonconstant map
to the lower-dimensional sphere $S^{\prime }=S^{i}\cap \left\{ x\in \mathbb{R%
}^{i+1}:\beta _{n}\cdot x=0\right\} $, and by the induction hypothesis $%
D_{\alpha }$ is not constant (because it is $1+D_{\alpha }^{\prime }$ where $%
D_{\alpha }^{\prime }$ is the corresponding dimension on $S^{\prime }$), so
the statement is true for $k=i+1$, and we stop. Otherwise, $U_{n}$ is a
proper, nonempty open subset of $\left[ a_{n-1},b_{n-1}\right] $. Then $%
\beta _{n}\cdot \alpha \left( s\right) $ is nonconstant (since it is nonzero
on $U_{n}$ and $0$ on the complement) on every connected open subset of $%
U_{n}$, making $\alpha $ nonconstant on any connected open subset of $U_{n}$%
, i.e. on an open interval $I_{n}\subseteq U_{n}$. We then let $\left[
a_{n},b_{n}\right] $ be a closed interval within $I_{n}$, sufficiently large
so that $\alpha $ is not constant on $\left[ a_{n},b_{n}\right] $. If this
process of constructing $\left[ a_{n},b_{n}\right] $ terminates, we are
done, as mentioned before; otherwise we choose an $x\in
\bigcap\limits_{n\geq 0}\left[ a_{n},b_{n}\right] $. Since $\beta _{j}\cdot
\alpha \left( x\right) \neq 0$ for all $j$, $D_{\alpha }\left( x\right) =0$.
Thus, if $D_{\alpha }\left( s\right) \neq 0$ for any $s\in I$, we have
proved the case $k=i+1$. Otherwise, we have that $D_{\alpha }\left( s\right)
=0$ for all $s\in I$. However, this is not possible, because of the
following. Since $\alpha $ is not constant and continuous, there exist $%
s_{1},s_{2}\in I$ such that $\alpha \left( s_{1}\right) $ and $\alpha \left(
s_{2}\right) $ are not on the same line. But then, there exists $\beta \in 
\mathbb{Q}^{i+1}\setminus \left\{ 0\right\} $ contained in the intersection
of the open half spaces $\beta \cdot \alpha \left( s_{1}\right) <0$ and $%
0<\beta \cdot \alpha \left( s_{2}\right) $. By the intermediate value
theorem, there exists $s_{3}$ between $s_{1}$ and $s_{2}$ such that $\beta
\cdot \alpha \left( s_{3}\right) =0$, so $D_{\alpha }\left( s_{3}\right) >0$%
, a contradiction. Thus, in every situation the case $k=i+1$ is true, so by
induction the statement holds for all $k$.
\end{proof}

\begin{corollary}
(Rigidity of Isometric flows on spheres)\label{RIgidityIsomFlowsCorollary}
For some connected open set $U$ of a manifold $\Sigma $, suppose that $%
\mathbb{C}^{k}\times U$ contains an oriented singular Riemannian flow with
associated bundle-like metric such that the flow restricts to a singular
isometric flow on each $\mathbb{C}^{k}\times \left\{ s\right\} $ of the form 
$\left( z_{1},z_{2},...,z_{k}\right) \mapsto \left( \exp \left( i\alpha
_{1}\left( s\right) t\right) z_{1},\exp \left( i\alpha _{2}\left( s\right)
t\right) z_{2},...,\exp \left( i\alpha _{k}\left( s\right) t\right)
z_{k}\right) $ for some $\alpha \left( s\right) =\left( \alpha _{1}\left(
s\right) ,...,\alpha _{k}\left( s\right) \right) \in S^{k-1}$. Then $\alpha $
is constant on $U$.

\begin{proof}
By the previous two propositions, if $\alpha $ is not constant on $U$, then
there is a one-parameter family $\alpha \left( s\right) $ with $s\in U$ such
that the top dimensions of the leaf closures of the flow restricted to
spheres will change discontinuously. In such a situation, it is not possible
that the distance between leaves is locally constant, so that the singular
flow cannot be Riemannian.
\end{proof}
\end{corollary}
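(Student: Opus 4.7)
The plan is to argue by contradiction. Suppose $\alpha:U\to S^{k-1}$ is nonconstant. I first use path connectedness of the open connected set $U$ to choose a continuous path $\sigma:[0,1]\to U$ along which $\alpha\circ\sigma$ is nonconstant; applying the preceding Proposition, the integer-valued function $D_{\alpha\circ\sigma}(s)=\dim_{\mathbb{Q}}\{\beta\in\mathbb{Q}^k:\beta\cdot\alpha(\sigma(s))=0\}$ is nonconstant on $[0,1]$.

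Next I translate this into a statement about leaf closures of the ambient flow. By the preceding Lemma, for any $z_0\in\mathbb{C}^k$ with all coordinates nonzero, the closure (inside the sphere $\{|z|=|z_0|\}$ in the fiber) of the orbit through $z_0$ of the isometric flow on $\mathbb{C}^k\times\{\sigma(s)\}$ has dimension $k-D_{\alpha\circ\sigma}(s)$. Since leaves (and hence their closures) of the ambient flow on $\mathbb{C}^k\times U$ lie in single fibers, and each fiber $\mathbb{C}^k\times\{s\}$ is closed in the ambient manifold, these coincide with the leaf closures of the ambient flow through the points $(z_0,\sigma(s))$, and their dimensions vary nontrivially with $s$.

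The main obstacle is deducing a genuine contradiction with the bundle-like metric. My approach is to invoke Molino's structure theorem: the closures of leaves of a singular Riemannian foliation themselves form a singular Riemannian foliation, and in particular the stratum of points whose leaf closure attains the maximal dimension is open and dense in the ambient manifold. I must then produce a point $(z_0,\sigma(s_*))$ in this maximal stratum which is a limit of points of strictly smaller leaf-closure dimension. For this I revisit the nested-intervals construction in the Proposition's proof: if $\{s:D_{\alpha\circ\sigma}(s)>0\}$ were nowhere dense on some subinterval $J\subseteq[0,1]$ on which $\alpha\circ\sigma$ is nonconstant, then $D_{\alpha\circ\sigma}$ would be identically zero on some open subinterval on which $\alpha\circ\sigma$ is still nonconstant, which the intermediate-value argument at the end of that proof rules out. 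Hence along $\sigma$ there exist parameters $s_n\to s_*$ with $D_{\alpha\circ\sigma}(s_n)>D_{\alpha\circ\sigma}(s_*)$; fixing a generic $z_0$, the leaf closures through $(z_0,\sigma(s_n))$ have strictly smaller dimension than the leaf closure through $(z_0,\sigma(s_*))$, contradicting the openness of the top-dimensional stratum. Equivalently, and in line with the paper's outline, this dimension jump prevents the transverse distance between nearby orbits from being locally constant, violating the defining equidistance property of a bundle-like metric and forcing $\alpha$ to be constant on $U$.
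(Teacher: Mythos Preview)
Your argument follows the same line as the paper's: if $\alpha$ were nonconstant, the preceding Lemma and Proposition force the maximal leaf closure dimension to vary along the family, and this is incompatible with the Riemannian hypothesis. You make the final contradiction more explicit than the paper does, invoking Molino's result that leaf closures of a singular Riemannian foliation again form one (so their top-dimensional stratum is open), whereas the paper simply asserts that a jump in leaf closure dimension prevents local equidistance of leaves.

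There is, however, a gap in your production of the point $s_*$. From ``$\{D>0\}$ is nowhere dense on $J$'' you cannot deduce that $D\equiv 0$ on an open subinterval on which $\alpha\circ\sigma$ is still nonconstant: the interior of $\{D=0\}$ could be a dense open set on each of whose components $\alpha\circ\sigma$ is constant, with $\alpha\circ\sigma$ nonconstant only because different components carry different values. Conversely, knowing that $\{D>0\}$ is \emph{not} nowhere dense does not by itself yield $s_n\to s_*$ with $D(s_n)>D(s_*)$; for instance, $D$ could be upper semicontinuous and still have $\{D>0\}$ with nonempty interior. A correct repair: assuming the ambient flow Riemannian, Molino forces $V=\{s\in U:D_\alpha(s)=\min D_\alpha\}$ to be open; on each connected component of $V$ the IVT argument from the Proposition (applied, when $\min D_\alpha>0$, after passing to the lower-dimensional sphere determined by the rational relations already satisfied, exactly as in the inductive step there) shows $\alpha$ must be constant, and then continuity of $\alpha$ at a boundary point of such a component contradicts that point lying outside $V$. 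This fills the gap without altering your overall strategy, which matches the paper's.
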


As a result, the restriction of our foliation to a ball in $N\Sigma $ over
each point of a connected component of $\Sigma $ must be the set of orbits
of the group action 
\begin{equation*}
\left( z_{1},z_{2},...,z_{k}\right) \mapsto \left( \exp \left( i\alpha
_{1}\left( s\right) t\right) z_{1},\exp \left( i\alpha _{2}\left( s\right)
t\right) z_{2},...,\exp \left( i\alpha _{k}\left( s\right) t\right)
z_{k}\right) ,
\end{equation*}
where the subspaces corresponding to the span of all $z_{j}$ for equal $%
\alpha _{j}$ may vary with $y\in \Sigma $. In order to determine the
allowable variations in these subspaces of the normal spaces $N_{y}\Sigma $,
we first must understand all orientation-preserving isometries of $\mathbb{C}%
^{k}$ fixing the origin $y$ and preserving the foliation. In particular, the
isometry $L=L_{y}$ at $y$ must preserve the tangent bundle of the foliation.
This means that the vector field is determined by the matrix multiplication:%
\begin{equation*}
V_{x}=Mx=\left( 
\begin{array}{ccccccc}
0 & -\alpha _{1} & 0 & 0 & 0 & ... & 0 \\ 
\alpha _{1} & 0 & 0 & 0 & \vdots & \vdots & \vdots \\ 
0 & 0 & 0 & -\alpha _{2} & 0 & \vdots & \vdots \\ 
0 & 0 & \alpha _{2} & 0 & 0 & 0 & \vdots \\ 
0 & \vdots & 0 & 0 & \ddots & 0 & 0 \\ 
\vdots & \vdots & \vdots & \vdots & 0 & 0 & -\alpha _{k} \\ 
0 & ... & ... & ... & 0 & \alpha _{k} & 0%
\end{array}%
\right) x=\left( 
\begin{array}{cccc}
R_{1} & 0 & 0 & 0 \\ 
0 & R_{2} & 0 & 0 \\ 
0 & 0 & \ddots & 0 \\ 
0 & 0 & 0 & R_{k}%
\end{array}%
\right) x
\end{equation*}%
must be preserved up to a positive scalar multiple. This means that the push
forward of the isometry maps $V$ to a positive scalar multiple of itself.
That is, we require that%
\begin{equation*}
LMx=\lambda M\left( Lx\right)
\end{equation*}%
for some $\lambda >0,$ for all $x$. Since $M$ and $L$ are invertible, taking
determinants shows that $\lambda =1$.

\vspace{0in}We renumber the $\left\{ \alpha _{j}\right\} $ to distinct $%
\beta _{j}$ with multiplicities $\mu _{j}$. A simple linear algebra argument
yields the following. Using the standard inclusion of $GL_{k}\left( \mathbb{C%
}\right) $ in $GL_{2k}\left( \mathbb{R}\right) $ , we have 
\begin{equation*}
\left\{ L\in GL_{2k}\left( \mathbb{R}\right) :LM=ML\right\} =\left\{
L=\left( 
\begin{array}{cccc}
B_{1} & 0 & 0 & 0 \\ 
0 & B_{2} & 0 & 0 \\ 
0 & 0 & \ddots & 0 \\ 
0 & 0 & 0 & B_{p}%
\end{array}%
\right) \in GL_{k}\left( \mathbb{C}\right) :B_{j}\in GL_{\mu _{j}}\left( 
\mathbb{C}\right) \right\} .
\end{equation*}

This implies that the tubular neighborhood of each connected component of
the singular stratum of the Riemannian flow is diffeomorphic to a
neighborhood of the zero section in a direct sum of complex vector bundles $%
E_{1},...,E_{p}$ over the singular stratum. The flow is multiplication by $%
\exp \left( i\alpha _{j}t\right) $ on each $E_{j}$, where $\left( \alpha
_{1},...,\alpha _{p}\right) $ is a locally constant vector. Considering the
torus action by $\left( z_{1},...,z_{p}\right) \in T^{p}$ on the vector
bundle by 
\begin{equation*}
\left( z_{1},...,z_{p}\right) \left( v_{1},...,v_{p}\right) \mapsto \left(
z_{1}v_{1},...,z_{p}v_{p}\right) ,
\end{equation*}
the flow is a subgroup of this torus action. We average the metric over the
torus flow to get a new metric $g_{1}$, so that the Riemannian flow is an
isometric flow near each connected component of the singular stratum for
this new metric. We summarize this discussion in the following theorem.

\begin{theorem}
\label{TubularRiemFlowIsIsometric}The restriction of the oriented singular
Riemannian flow $\left( M,\mathcal{F}\right) $ to a tubular neighborhood of
a connected component of the singular stratum is foliated-diffeomorphic to
an isometric flow on the same tubular neighborhood.
\end{theorem}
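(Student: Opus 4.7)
My plan is to assemble the theorem from the structural machinery developed earlier in the section. First, I would transport the problem to the normal bundle: the normal exponential map $\exp^\bot$ is a diffeomorphism from $N(\Sigma,\varepsilon)$ onto $\mathrm{Tub}_\varepsilon(\Sigma)$, and replacing $g$ by the linearized metric $g_L$ does not alter the foliation. By Lemma~\ref{isometricDiskLemma}, the induced flow on each fiber $N_y\Sigma$ is a linear isometric flow whose only fixed point is the origin, so $N_y\Sigma$ has even real rank $2k$. Invoking the representation-theoretic normal form~\eqref{formOfIsometricFlow}, I would pick, at each $y\in\Sigma$, a unitary identification $N_y\Sigma\cong\mathbb{C}^k$ diagonalizing the flow with weight vector $\alpha(y)=(\alpha_1(y),\ldots,\alpha_k(y))\in S^{k-1}$.

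Next I would use Corollary~\ref{RIgidityIsomFlowsCorollary} to force $\alpha$ to be locally constant, and therefore constant on each connected component $C$ of $\Sigma$; orientability rules out the remaining $\pm 1$ ambiguity. Grouping indices with equal weights into distinct values $\beta_1,\ldots,\beta_p$ with multiplicities $\mu_1,\ldots,\mu_p$, the commutant computation recalled in the excerpt identifies the fiberwise isometries that preserve the flow as block-diagonal matrices in $GL_{\mu_1}(\mathbb{C})\times\cdots\times GL_{\mu_p}(\mathbb{C})$. This rigidity forces the weight eigenspace decomposition to be intrinsic, so it patches into a smooth splitting
\[
N\Sigma|_C = E_1\oplus\cdots\oplus E_p
\]
by complex vector subbundles $E_j$ of rank $\mu_j$ over $C$, on which the generating vector field acts fiberwise by multiplication by $i\beta_j$.

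Finally, the flow on $E_1\oplus\cdots\oplus E_p$ is the one-parameter subgroup $t\mapsto(e^{i\beta_1 t},\ldots,e^{i\beta_p t})$ of the compact fiberwise torus action of $T^p$ rotating each summand. Averaging $g_L$ over this $T^p$-action produces a smooth metric $g_1$ on $N(\Sigma,\varepsilon)$ that is $T^p$-invariant, hence in particular invariant under the flow. Pushing $g_1$ forward through $\exp^\bot$ gives a metric on $\mathrm{Tub}_\varepsilon(\Sigma)$ for which the flow is isometric, and the identity map serves as the required foliated diffeomorphism.

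The main obstacle is the middle paragraph: upgrading pointwise normal forms on individual fibers to a global smooth bundle splitting over a component of $\Sigma$. This is precisely where both the rigidity corollary (to freeze the weights) and the commutant analysis (to constrain transition functions between fiber trivializations) are indispensable. Once the subbundle decomposition is in hand, extending the $\mathbb{R}$-flow to the torus and averaging the metric is routine.
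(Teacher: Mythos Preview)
Your proposal is correct and follows essentially the same route as the paper: the theorem is stated as a summary of the preceding discussion in Section~\ref{StrTubNbhdSection}, and you have faithfully reconstructed that discussion---linearize via $\exp^{\bot}$ and $g_L$, invoke Lemma~\ref{isometricDiskLemma} for the fiberwise isometric flow, freeze the weight vector with Corollary~\ref{RIgidityIsomFlowsCorollary}, use the commutant $\prod_j GL_{\mu_j}(\mathbb{C})$ to obtain the complex subbundle splitting $E_1\oplus\cdots\oplus E_p$, embed the flow in the $T^p$-action, and average the metric. Your identification of the passage from pointwise normal forms to a global smooth splitting as the crux is also on target; the paper handles it exactly as you describe.
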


\begin{lemma}
\label{patchingLemma}Let $\left( U,\mathcal{F}\right) $ be a smooth,
oriented flow on a smooth manifold $U$, and suppose that there exist two
bundle-like metrics $g$, $\widetilde{g}$ for the flow. Let $\psi
:U\rightarrow \left[ 0,1\right] $ be a basic function. Then there exists a
smooth bundle-like metric $g_{\psi }$ for $\left( U,\mathcal{F}\right) $
such that $g_{\psi }=g$ on $\psi ^{-1}\left( 0\right) $ and $g_{\psi }=%
\widetilde{g}$ on $\psi ^{-1}\left( 1\right) $.

\begin{proof}
Near any point we may choose a foliation chart with adapted coordinates $%
(x,y)\in \mathbb{R}\times \mathbb{R}^{q}$. A metric on this chart is a
bundle-like metric if and only if it has the form%
\begin{equation*}
\theta \otimes \theta +\sum_{\alpha ,\beta =1}^{q}h_{\alpha \beta
}~dy^{\alpha }\otimes dy^{\beta },
\end{equation*}%
where $\theta $ is a one-form such that $\ \theta \left( \partial
_{x}\right) >0$ and $h_{\alpha \beta }$ is a positive definite symmetric
matrix of basic functions; see \cite[Section IV, Proposition 4.2]{Re1}. In
fact, $\theta $ is always dual to a unit vector field tangent to the flow
and $T\mathcal{F}^{\bot }=\ker \theta $; thus, up to a sign $\theta $ is
defined globally.\newline
We express $g$ in this form as 
\begin{equation*}
g=\theta \otimes \theta +\sum_{\alpha ,\beta =1}^{q}h_{\alpha \beta }\left(
y\right) ~dy^{\alpha }\otimes dy^{\beta }=g_{T}+g_{N},
\end{equation*}%
Similarly, we have 
\begin{equation*}
\widetilde{g}=\widetilde{\theta }\otimes \widetilde{\theta }+\sum_{\alpha
,\beta =1}^{q}\widetilde{h}_{\alpha \beta }(y)~dy^{\alpha }\otimes dy^{\beta
}=\widetilde{g}_{T}+\widetilde{g}_{N}.
\end{equation*}%
Let $\Pi :TU\rightarrow T\mathcal{F}$ be the orthogonal projection defined
by the first metric $g$. Since $\widetilde{\theta }\left( \partial
_{x}\right) >0$, also $\left( \Pi ^{\ast }\widetilde{\theta }\right) \left(
\partial _{x}\right) >0$. We define a new bundle-like metric $\overline{g}$
by 
\begin{equation*}
\overline{g}=\left( \Pi ^{\ast }\widetilde{\theta }\right) \otimes \left(
\Pi ^{\ast }\widetilde{\theta }\right) +\sum_{\alpha ,\beta =1}^{q}%
\widetilde{h}_{\alpha \beta }(y)~dy^{\alpha }\otimes dy^{\beta }=\overline{g}%
_{T}+\widetilde{g}_{N}.
\end{equation*}%
Note that the bundles $T\mathcal{F}$ and $T\mathcal{F}^{\perp }$ agree for
both $g$ and $\overline{g}$, since $\ker \theta =\ker \left( \Pi ^{\ast }%
\widetilde{\theta }\right) $. Since $\widetilde{\theta }$ and $\Pi ^{\ast }%
\widetilde{\theta }$ are globally defined up to a sign, it is clear that $%
\widetilde{g}$ and $\overline{g}$ are homotopic through a global homotopy
transforming $\widetilde{\theta }$ to $\Pi ^{\ast }\widetilde{\theta }$.
Next, we may homotop $\overline{g}$ to $g$ by separately homotoping $%
\overline{g}_{T}$ to $g_{T}$ on $T\mathcal{F}$ and $\widetilde{g}_{N}$ and $%
g_{N}$ on $T\mathcal{F}^{\bot }$. With a simple concatenation and
adjustments to the parameter to make the concatenation smooth, a smooth
homotopy connecting $g$ to $\widetilde{g}$ can be formed, and this homotopy
is a homotopy of bundle-like metrics. If we replace the homotopy parameter
with the single basic function $\psi $, it is easy to check that the
resulting metric $g_{\psi }$ is bundle-like for $\left( U,\mathcal{F}\right) 
$.
\end{proof}
\end{lemma}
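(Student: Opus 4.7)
The plan is to construct a smooth path of bundle-like metrics from $g$ to $\widetilde{g}$ and then reparametrize it pointwise using $\psi$. The naive convex combination $(1-\psi)g + \psi \widetilde{g}$ fails in general: the distributions $T\mathcal{F}^\perp$ defined by $g$ and $\widetilde{g}$ may disagree, and a convex combination of bundle-like metrics need not be bundle-like. So the real work is in producing a path that respects the bundle-like structure at every stage.

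First I would recall the local normal form of a bundle-like metric: in adapted coordinates $(x,y) \in \mathbb{R} \times \mathbb{R}^q$ for a foliation chart, such a metric takes the form $\theta \otimes \theta + \sum_{\alpha,\beta} h_{\alpha\beta}(y)\, dy^\alpha \otimes dy^\beta$ with $\theta(\partial_x) > 0$ and each $h_{\alpha\beta}$ basic. Since the flow is oriented, the tangential one-form $\theta$ is dual to a globally defined unit vector field along $\mathcal{F}$, so $\theta$ itself is globally defined. To interpolate between $g$ and $\widetilde{g}$, I would pass through an intermediate metric $\overline{g}$ that shares the splitting $T\mathcal{F} \oplus T\mathcal{F}^\perp$ of $g$ but inherits the transverse part of $\widetilde{g}$. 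Using the $g$-orthogonal projection $\Pi : TU \to T\mathcal{F}$, I replace $\widetilde{\theta}$ by $\Pi^{\ast} \widetilde{\theta}$ so that $\ker(\Pi^{\ast} \widetilde{\theta}) = \ker \theta$. A linear interpolation of tangential one-forms from $\widetilde{\theta}$ to $\Pi^{\ast} \widetilde{\theta}$ (each dual to a positive multiple of the flow direction), followed by separate convex combinations of the tangential and normal parts of $\overline{g}$ and $g$—legitimate now that the splitting is common and convex combinations of positive-definite symmetric matrices of basic functions remain so—produces a smooth concatenated homotopy $t \mapsto g_t$ through bundle-like metrics with $g_0 = g$ and $g_1 = \widetilde{g}$.

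Finally, I would set $g_\psi(p) := g_{\psi(p)}(p)$. The hard part will be verifying that this pointwise substitution preserves the bundle-like property, not merely smoothness or the correct boundary behavior. This is where the hypothesis that $\psi$ is basic is essential: in adapted coordinates $\psi$ depends only on the transverse variables $y$, so the coefficients $h_{\alpha\beta}(y,\psi(y))$ in the normal part remain basic functions of $y$, and the tangential component remains a one-form with positive pairing against $\partial_x$. It follows that $g_\psi$ is smooth and bundle-like, and coincides with $g$ on $\psi^{-1}(0)$ and with $\widetilde{g}$ on $\psi^{-1}(1)$ by construction.
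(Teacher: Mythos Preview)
Your proposal is correct and follows essentially the same route as the paper's proof: you introduce the intermediate metric $\overline{g}$ via $\Pi^{\ast}\widetilde{\theta}$ to align the orthogonal splittings, perform the two-stage homotopy (first $\widetilde{\theta}\to\Pi^{\ast}\widetilde{\theta}$, then separate tangential and normal interpolations), and finally substitute the basic function $\psi$ for the homotopy parameter. Your treatment is in fact slightly more explicit than the paper's about why the naive convex combination fails and why basicity of $\psi$ is needed for the final step.
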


\begin{corollary}
\label{isometricModificationCorollary}Given a nondegenerate transverse
Killing field $X$ on a compact, smooth manifold $M$, there exists a metric
on $M$ for which the restriction of $X$ to a tubular neighborhood of the
singular stratum $\Sigma $ is a Killing vector field, and such that each
component of $\Sigma $ is a totally geodesic submanifold.
\end{corollary}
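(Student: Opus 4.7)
The plan is to combine Theorem \ref{TubularRiemFlowIsIsometric} with Lemma \ref{patchingLemma}. Since $X$ is a nondegenerate transverse Killing field, by definition there is a bundle-like metric $g$ on $M$ making the singular flow of $X$ into a singular Riemannian flow; orient $\mathcal{F}$ using $X$. Apply Theorem \ref{TubularRiemFlowIsIsometric} to obtain a metric $g_{1}$ on a tubular neighborhood $U = \mathrm{Tub}_{\varepsilon}(\Sigma)$ for which the restricted flow is isometric. Equivalently, $X|_{U}$ is a Killing vector field for $g_{1}$.

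The totally geodesic property is then a consequence of the classical fact that the fixed-point set of a group of isometries is totally geodesic: the isometries $\exp(tX)$ of $(U, g_{1})$ fix $\Sigma$ pointwise, so each connected component of $\Sigma$ is a connected component of this fixed-point set, hence a totally geodesic submanifold of $(U, g_{1})$. One can alternatively verify this directly from the model constructed in Section \ref{StrTubNbhdSection}, since the fiber reflection $v \mapsto -v$ is an isometry of the torus-averaged linearized metric and fixes the zero section $\Sigma$ pointwise.

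To extend $g_{1}$ from $U$ to all of $M$, shrink $\varepsilon$ if necessary and choose $0 < \delta_{1} < \delta_{2} < \varepsilon$. Construct a smooth function $\psi : M \to [0,1]$ by composing $d(\cdot, \Sigma)^{2}$ with a cut-off, so that $\psi \equiv 0$ on $\mathrm{Tub}_{\delta_{1}}(\Sigma)$ and $\psi \equiv 1$ outside $\mathrm{Tub}_{\delta_{2}}(\Sigma)$. Because $\Sigma$ is saturated by leaves of the singular Riemannian flow, the distance to $\Sigma$ is preserved along every leaf, so $\psi$ is a basic function, and $d^{2}$ being smooth in a tubular neighborhood guarantees $\psi$ is smooth across $\Sigma$. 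On the regular foliated manifold $M \setminus \Sigma$, both $g_{1}$ (on $U \setminus \Sigma$) and $g$ are bundle-like, and Lemma \ref{patchingLemma} applied with these metrics and this $\psi$ yields a bundle-like metric that equals $g_{1}$ on $\mathrm{Tub}_{\delta_{1}}(\Sigma) \setminus \Sigma$ and equals $g$ outside $\mathrm{Tub}_{\delta_{2}}(\Sigma)$. Gluing this metric with $g_{1}$ across $\Sigma$ gives a smooth bundle-like metric $g'$ on all of $M$; smoothness is automatic because $g'$ coincides with the globally defined smooth metric $g_{1}$ in a full neighborhood of $\Sigma$. By construction $X$ is Killing on $\mathrm{Tub}_{\delta_{1}}(\Sigma)$ and each component of $\Sigma$ is totally geodesic in $g'$.

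The main obstacle I anticipate is simply bookkeeping: ensuring that the patched metric coincides with $g_{1}$ in a full neighborhood of $\Sigma$ (so that both the Killing condition and the totally geodesic condition survive) while still extending smoothly across $\Sigma$. This is handled by insisting that $\psi$ has a constant-zero plateau near $\Sigma$, together with the smoothness of $d(\cdot, \Sigma)^{2}$ on $U$. The basicness of $\psi$, which is what allows Lemma \ref{patchingLemma} to apply, is guaranteed by the Riemannian property of the flow and the fact that $\Sigma$ is a union of leaves.
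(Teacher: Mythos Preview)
Your proposal is correct and follows essentially the same route as the paper: invoke Theorem~\ref{TubularRiemFlowIsIsometric} to get an isometric-flow metric on a tube, then use Lemma~\ref{patchingLemma} with a basic cutoff to glue it to the original bundle-like metric. You supply more detail than the paper does---in particular you explicitly justify the totally geodesic claim via the fixed-point set of the isometric flow and you are careful about applying Lemma~\ref{patchingLemma} on $M\setminus\Sigma$ and then extending smoothly across $\Sigma$---whereas the paper's proof simply asserts that the patched metric ``does the job.''
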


\begin{proof}
Use the Lemma to patch the original metric $g$ with the tubular neighborhood
metric $\widetilde{g}$ in the Theorem above using a basic cutoff function $%
\rho $ that is $1$ in a neighborhood of $\Sigma $ and is $0$ outside a
larger neighborhood; the new metric $g_{\rho }$ does the job.
\end{proof}

\section{Modification of the metric and localization}

It is enough to consider the case where the dimension of our manifold $M$ is 
$2m$, since the characteristic numbers vanish on odd-dimensional manifolds.

\subsection{Estimate on the complement of a tubular neighborhood of the
singular stratum\label{FirstModificationSubsection}}

Our manifold $M$ is endowed with a oriented singular Riemannian flow $%
\mathcal{F}$ whose tangent bundle is given by the span of the vector field $%
X $. As in Corollary \ref{isometricModificationCorollary}, we choose a
metric $g$ on $M$ and an $\varepsilon >0$ such that the restriction of $X$
to the tubular neighborhood $T_{\varepsilon }\Sigma $ is a Killing vector
field and that $\mathcal{F}$ is a singular Riemannian flow globally with $g$
bundle-like. We now modify this metric to a new metric $g_{t}$ as follows.
For $t>0$, let $g_{t}$ denote the metric on $M$ defined by $g_{t}\left(
X,Y\right) =g\left( X,Y\right) $ for all vectors $X$, $Y$ as long as one of
them is in $\left( T\mathcal{F}\right) ^{\bot }$ and $g_{t}\left( X,X\right)
=\rho \left( t,r\right) g\left( X,X\right) $ if $X\in T\mathcal{F}$. Here, $%
\rho \left( t,r\right) $ is a function on $M$ defined as $\rho \left(
t,r\right) =f\left( r\right) t^{2}+1-f\left( r\right) $ where $r=\mathrm{dist%
}\left( \cdot ,\Sigma \right) $ and 
\begin{equation*}
f\left( r\right) =\left\{ 
\begin{array}{ll}
0 & \text{if }r\leq \frac{\varepsilon }{3} \\ 
1 & \text{if }r\geq \frac{2\varepsilon }{3} \\ 
\text{smooth, increasing~} & \text{if }\frac{\varepsilon }{3}<r<\frac{%
2\varepsilon }{3}.%
\end{array}%
\right.
\end{equation*}

\begin{lemma}
\vspace{0in}For every $t>0$, the singular foliation $\left( M,\mathcal{F}%
,g_{t}\right) $ is Riemannian and restricts to an isometric flow on $%
T_{\varepsilon }\Sigma $.

\begin{proof}
First we consider the restriction of $\mathcal{F}$ to the tube $%
T_{\varepsilon }\Sigma $, where $X$ is a Killing field for $g$. Note that
this is still the tube of radius $\varepsilon $ in the new metric $g_{t}$.
The flow of $X$ preserves the tangent space $T\mathcal{F}$ and normal space $%
N\mathcal{F}$ and the metric on $N\mathcal{F}$. Since $X\rho =0$, the
Leibniz rule implies that the Lie derivative $\mathcal{L}_{X}g_{t}^{\text{%
\textrm{tan}}}$of the tangential metric is still zero. Then it follows that $%
\mathcal{L}_{X}g_{t}=0$. Next, outside of $T_{\varepsilon }\Sigma $, the
tangential metric is multiplied by the scalar $t^{2}$, so the leaves of the
foliation remain equidistant, so that the metric $g_{t}$ is still
bundle-like outside of $T_{\varepsilon }\Sigma $.
\end{proof}
\end{lemma}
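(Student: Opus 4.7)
The plan is to verify the two claims separately, partitioning $M$ by where the modification function $\rho(t,r)$ is nonconstant versus where it is constant.

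For the isometric claim on $T_{\varepsilon}\Sigma$, my approach is to apply the Leibniz rule to $\mathcal{L}_{X}g_{t}$ after decomposing $g_{t}=\rho\, g^{\mathrm{tan}}+g^{\perp}$, where $g^{\mathrm{tan}}$ and $g^{\perp}$ denote the restrictions of $g$ to $T\mathcal{F}$ and $(T\mathcal{F})^{\perp}$, respectively (both unchanged by the construction). Since $X$ is tangent to $\mathcal{F}$ and $g$ is bundle-like, the flow of $X$ preserves the $T\mathcal{F}\oplus (T\mathcal{F})^{\perp}$ splitting; and because $X$ is already Killing for $g$ on $T_{\varepsilon}\Sigma$ by Corollary \ref{isometricModificationCorollary}, both $\mathcal{L}_{X}g^{\mathrm{tan}}$ and $\mathcal{L}_{X}g^{\perp}$ vanish there. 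Hence $\mathcal{L}_{X}g_{t}=(X\rho)\, g^{\mathrm{tan}}$, and the entire claim reduces to checking $X\rho \equiv 0$. Since $\rho$ depends only on $t$ and $r$, it suffices to show $Xr\equiv 0$: the time-$s$ flow $\phi_{s}$ of $X$ is a $g$-isometry of the tube that fixes $\Sigma$ pointwise (as $\Sigma$ consists of singular leaves where $X$ vanishes), and any isometry fixing a closed set pointwise preserves the distance function to it, giving $r\circ \phi_{s}=r$ and hence $Xr=0$.

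For the bundle-like property outside the tube, $\rho \equiv t^{2}$ is a positive constant on $M\setminus \overline{T_{2\varepsilon/3}\Sigma}$, so $g_{t}$ is obtained from the bundle-like $g$ by rescaling only the tangential piece by a constant factor. Writing $g$ locally in the Reinhart form $\theta \otimes \theta + h_{\alpha \beta}(y)\, dy^{\alpha}\otimes dy^{\beta}$ with $h_{\alpha \beta}$ basic, $g_{t}$ takes the same form with $\theta$ replaced by $t\,\theta$, so $g_{t}$ is bundle-like there. Because $T_{\varepsilon}\Sigma$ (where the isometric argument already delivers bundle-likeness) and $M\setminus \overline{T_{2\varepsilon/3}\Sigma}$ cover $M$, the global bundle-like property follows, so $(M,\mathcal{F},g_{t})$ is a singular Riemannian foliation.

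I expect the main obstacle to be the computation $Xr=0$ on the tube, which crucially uses that $X$ has been upgraded in Corollary \ref{isometricModificationCorollary} to a genuine Killing field on a neighborhood of $\Sigma$, not merely a transverse Killing field. Without that upgrade the radial derivative $Xr$ could in general be nonzero, introducing an extra $(X\rho)\, g^{\mathrm{tan}}$ term in $\mathcal{L}_{X}g_{t}$ that would obstruct the isometric-flow conclusion in the transition annulus $\varepsilon/3 < r < 2\varepsilon/3$.
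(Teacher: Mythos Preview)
Your proof is correct and follows essentially the same approach as the paper: both arguments split into the tube (where $\mathcal{L}_X g_t=0$ via the Leibniz rule and $X\rho=0$) and the exterior (where only a constant tangential rescaling occurs, preserving bundle-likeness). You supply more detail than the paper does---in particular, the explicit justification of $Xr=0$ via the isometry $\phi_s$ fixing $\Sigma$, and the Reinhart normal form for the exterior---but the structure and key ideas coincide.
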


\begin{proposition}
\label{OutsideTubeGoesToZero}Let $g_{t}$ be the family of Riemannian metrics
on the manifold $M$. Let $\omega _{t}$ be a characteristic form, an $\mathrm{%
ad}\left( \mathrm{SO}\left( 2m\right) \right) $-invariant polynomial in the
curvature $\left( M,g_{t}\right) $. Then 
\begin{equation*}
\int_{M\setminus T_{\varepsilon }\Sigma }\omega _{t}\rightarrow 0
\end{equation*}%
as $t\rightarrow 0$.

\begin{proof}
\vspace{0in}Similar to the Carrier\`{e} argument in \cite[Lemmas 1.1 - 1.2]%
{Carr1.5}, the sectional curvatures of $(M,g_{t})$ on $M\setminus
T_{\varepsilon }\Sigma $ remain bounded as $t\rightarrow 0$. The $2m$-degree
part of $\omega _{t}$ is a polynomial with coefficients independent of $t$
in the sectional curvatures of $\left( M,g_{t}\right) $ times the volume
form of $g_{t}$. Since the volume form of $g_{t}$ is $t$ times the volume
form of $g$, the result follows.
\end{proof}
\end{proposition}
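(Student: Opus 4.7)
The plan is to bound the integrand pointwise by a constant multiple of $t$, by combining two observations: that the sectional curvatures of $g_t$ stay uniformly bounded on $M\setminus T_{\varepsilon}\Sigma$ as $t\to 0^{+}$, and that the volume form of $g_t$ on this region is exactly $t$ times that of $g$. Since $\omega_t$ is a fixed universal polynomial of degree $m$ in the curvature, these two facts together yield an $O(t)$ bound, from which the conclusion is immediate.

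For the curvature estimate I would work in local bundle-like coordinates adapted to $\mathcal{F}$. On $M\setminus T_{\varepsilon}\Sigma$ the cutoff satisfies $f\equiv 1$, so $\rho\equiv t^{2}$ and no $\nabla f$ term appears. Near any point I write $g=\theta\otimes\theta+g_{N}$, with $\theta$ the unit one-form dual to the flow and $g_{N}=h_{\alpha\beta}(y)\,dy^{\alpha}\otimes dy^{\beta}$ a basic transverse metric (as in Lemma~\ref{patchingLemma}), so that
\begin{equation*}
g_{t}=t^{2}\,\theta\otimes\theta+g_{N}.
\end{equation*}
Setting $\theta_{t}=t\theta$, this is the standard adiabatic rescaling of a Riemannian submersion in the flow direction. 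Computing in the $g_{t}$-orthonormal frame $\{t^{-1}X/|X|_{g},e_{1},\ldots,e_{2m-1}\}$, the transverse Lie brackets and connection coefficients are independent of $t$ because $g_{N}$ is basic, while the brackets involving the flow direction introduce factors of $d\theta$ which enter the curvature two-form multiplied by $t^{2}$ and never by $t^{-2}$. This is exactly the content of the Carri\`ere argument in Lemmas~1.1--1.2 of \cite{Carr1.5}, and it produces a uniform bound on all sectional curvatures of $g_{t}$ on the compact set $M\setminus T_{\varepsilon}\Sigma$.

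For the volume scaling, the $g_{t}$-orthonormal frame differs from the $g$-orthonormal frame only by rescaling the flow direction by $t^{-1}$, so on the same region $d\mathrm{vol}_{g_{t}}=t\,d\mathrm{vol}_{g}$. Any $\mathrm{ad}(\mathrm{SO}(2m))$-invariant polynomial of degree $m$ in the curvature two-form can be expressed as a polynomial with universal coefficients in the sectional curvatures of $g_{t}$ times $d\mathrm{vol}_{g_{t}}$; combining the previous paragraph with this scaling gives a pointwise bound $|\omega_{t}|\leq Ct$ on $M\setminus T_{\varepsilon}\Sigma$, uniform in $t\in(0,1]$, and integrating yields $\int_{M\setminus T_{\varepsilon}\Sigma}\omega_{t}=O(t)\to 0$. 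The only genuine obstacle is the curvature bound of the middle paragraph, but because we are working outside $T_{\varepsilon}\Sigma$ the metric $g_{t}$ is of clean adiabatic form with no cutoff derivatives to manage, so Carri\`ere's estimates apply directly.
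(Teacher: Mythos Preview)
Your proposal is correct and follows essentially the same approach as the paper's proof: both invoke Carri\`ere's bounded-curvature estimate for the adiabatic collapse of a Riemannian flow, observe that the volume form scales by $t$, and conclude that $\omega_{t}$ is $O(t)$ on $M\setminus T_{\varepsilon}\Sigma$. You supply more detail on the adiabatic structure (the form $g_{t}=t^{2}\theta\otimes\theta+g_{N}$ and the orthonormal frame), but the logical skeleton is identical.
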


\subsection{Estimate on the boundary of the tubular neighborhood}

\vspace{0in}We restrict our attention to $\overline{T_{\varepsilon }\Sigma }$%
, on which $X$ is a Killing field for the metric $g_{t}$.

We let 
\begin{equation*}
\omega _{t}=\phi \left( K_{t},...,K_{t}\right) ,
\end{equation*}%
where $\phi $ is a polynomial that is homogeneous of degree $m$ and $K_{t}$
is the Riemannian curvature two-form. Following \cite[Lemma 2]{bott1}, on $%
T_{\varepsilon }\Sigma \setminus \Sigma $, there exists a $2m-1$ form $\eta
_{t}$ defined as%
\begin{equation*}
\eta _{t}=\alpha \left\{ \phi _{K,t}^{1}+\phi _{K,t}^{2}d\alpha +...+\phi
_{K,t}^{m}\left( d\alpha \right) ^{m-1}\right\} .
\end{equation*}%
Here, $\alpha $ is the one-form defined as%
\begin{equation*}
\alpha \left( Y\right) =\frac{\left\langle X,Y\right\rangle }{\left\langle
X,X\right\rangle }=\frac{\left\langle X,Y\right\rangle _{t}}{\left\langle
X,X\right\rangle _{t}},
\end{equation*}%
and for $1\leq j\leq m$,%
\begin{equation*}
\phi _{K,t}^{j}=\binom{m}{j}\phi (\underset{j}{\underbrace{L_{t},...,L_{t}}}%
,K_{t},...,K_{t}),
\end{equation*}%
where $L_{t}$ is the endomorphism of $TM$ defined by%
\begin{equation*}
L_{t}=\mathcal{L}_{X}-\nabla _{X}^{g_{t}}.
\end{equation*}%
From \cite[Lemma 2]{bott1}, 
\begin{equation*}
i\left( X\right) \left( \omega _{t}-d\eta _{t}\right) =0,
\end{equation*}%
which then implies%
\begin{equation}
\omega _{t}-d\eta _{t}=0  \label{omegaIsDeta}
\end{equation}%
since $i\left( X\right) $ injective on top-degree forms on $T_{\varepsilon
}\Sigma \setminus \Sigma $.

\begin{proposition}
\label{etaEpsilonGoesToZero}With $\eta _{t}$ and $\varepsilon $ as above,%
\begin{equation*}
\int_{\partial T_{\varepsilon }\Sigma }\eta _{t}\rightarrow 0
\end{equation*}%
as $t\rightarrow 0$.

\begin{proof}
We have $\dim \partial T_{\varepsilon }\Sigma =2m-1$. Let $i:\partial
T_{\varepsilon }\Sigma \rightarrow M$ be the inclusion, so that the form $%
i^{\ast }\eta _{t}$ is well-defined, and%
\begin{equation*}
\int_{\partial T_{\varepsilon }\Sigma }\eta _{t}:=\int_{\partial
T_{\varepsilon }\Sigma }i^{\ast }\eta _{t}.
\end{equation*}%
Observe that $i^{\ast }\eta _{t}$ is a polynomial in sectional curvatures
and Christoffel symbols times the volume form of $\partial T_{\varepsilon
}\Sigma $. From Carrier\`{e}'s proof in \cite[Lemmas 1.1 - 1.2]{Carr1.5},
all the sectional curvatures and Christoffel symbols remain bounded as $%
t\rightarrow 0$, and the volume form of $i^{\ast }g_{t}$ is by construction $%
t$ times the volume form $i^{\ast }g$, since $X$ is tangent to $\partial
T_{\varepsilon }\Sigma $. The result follows.
\end{proof}
\end{proposition}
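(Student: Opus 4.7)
The plan is to show that the pointwise $g_t$-norm of $i^*\eta_t$ stays uniformly bounded on $\partial T_\varepsilon\Sigma$ as $t \to 0$, while the $g_t$-volume of the boundary shrinks like $t$; together these force the integral to vanish in the limit.

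First, $X$ is tangent to $\partial T_\varepsilon\Sigma$: because $X$ is Killing for $g_t$ on $\overline{T_\varepsilon\Sigma}$ and vanishes on $\Sigma$, its flow preserves the distance $r=\mathrm{dist}(\cdot,\Sigma)$, hence every level set $\{r=\text{const}\}$. On $\partial T_\varepsilon\Sigma$ I pick a $g_t$-orthonormal frame $\{\tilde e_0,\tilde e_1,\dots,\tilde e_{2m-2}\}$ with $\tilde e_0=X/|X|_{g_t}$ and the rest spanning $T\mathcal{F}^\perp\cap T(\partial T_\varepsilon\Sigma)$. Since $g_t=g$ on $T\mathcal{F}^\perp$ and $\rho(t,\varepsilon)=t^2$, we have $|X|_{g_t}=t|X|_g$ on the boundary, so the $g_t$-volume form there equals $t$ times the $g$-volume form.

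The main step is the pointwise bound. Carri\`ere's analysis in \cite[Lemmas~1.1--1.2]{Carr1.5} guarantees that, in the $g_t$-orthonormal frame, the components of $K_t$ and the Christoffel symbols of $g_t$ remain $O(1)$ as $t\to 0$. Because $X$ is also Killing for $g$ on the tube by Corollary~\ref{isometricModificationCorollary}, $|X|_g$ is a bounded basic function; differentiating $X=t|X|_g\,\tilde e_0$ with the Leibniz rule shows that the matrix entries of $L_t=-\nabla^{g_t}X$ in the $g_t$-frame are $O(t)$. By contrast, $\alpha=\tilde e^0/|X|_{g_t}$ has a coefficient of order $O(1/t)$, and a direct calculation in the frame yields the decomposition $d\alpha=\beta_1+\tilde e^0\wedge\beta_0$ with $\beta_0,\beta_1$ not involving $\tilde e^0$, where $|\beta_1|_{g_t}=O(1)$ and $|\beta_0|_{g_t}=O(1/t)$.

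Inserting these estimates into $\eta_t=\alpha\wedge\sum_j\phi_{K,t}^j(d\alpha)^{j-1}$, the identities $\alpha\wedge\tilde e^0=0$ and $\tilde e^0\wedge\beta_0\wedge\tilde e^0\wedge\beta_0=0$ collapse each summand to $\alpha\wedge C_j\wedge\beta_1^{j-1}$, where $C_j$ denotes the $\tilde e^0$-free part of $\phi_{K,t}^j$. Multiplying factor-wise yields pointwise $g_t$-norm $O(1/t)\cdot O(t^j)\cdot O(1)=O(t^{j-1})$, which is uniformly bounded for every $j\geq 1$. Hence $|i^*\eta_t|_{g_t}\leq C$ uniformly in $t$, and
\begin{equation*}
\Bigl|\int_{\partial T_\varepsilon\Sigma}i^*\eta_t\Bigr|\leq C\,\mathrm{vol}_{g_t}(\partial T_\varepsilon\Sigma)=Ct\,\mathrm{vol}_g(\partial T_\varepsilon\Sigma)\longrightarrow 0.
\end{equation*}
The main obstacle is isolating the cancellation: the $1/|X|_{g_t}$ singularity of $\alpha$ in the $g_t$-frame is precisely absorbed by the factor $|X|_{g_t}=t|X|_g$ that each $L_t$ acquires under differentiation. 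Once this cancellation is noticed, the remaining estimates are governed by Carri\`ere's uniform curvature and connection bounds together with the linear collapse of the volume form of $i^*g_t$.
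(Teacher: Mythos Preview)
Your proof is correct and follows the same strategy as the paper's: show that the $g_t$-norm of $i^*\eta_t$ stays uniformly bounded (via Carri\`ere's curvature and connection bounds) while the $g_t$-volume of $\partial T_\varepsilon\Sigma$ scales like $t$. Your version is more explicit about the $O(1/t)$-versus-$O(t)$ cancellation between $\alpha$ and the entries of $L_t$, whereas the paper compresses this into the assertion that the coefficient of $i^*\eta_t$ relative to the $g_t$-volume form is a bounded polynomial in sectional curvatures and Christoffel symbols.
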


\section{Computation of the characteristic numbers}

Let $M$ be a compact, oriented, Riemannian manifold of dimension $2m$,
endowed with a singular Riemannian flow $\mathcal{F}$ corresponding to the
given transverse Killing vector field $X$. Let $FM\overset{\pi }{\rightarrow 
}M$ denote the oriented orthonormal frame bundle of $M$, and let $K$ be the $%
\mathfrak{o}\left( 2m\right) $-valued curvature two-form on $FM$. Let $\phi $
be an $\mathrm{ad}\left( \mathrm{SO}\left( 2m\right) \right) $-invariant
symmetric form of degree $m$ on $\mathfrak{o}\left( 2m\right) $. Then the
function $\phi \left( K\right) :=\phi \left( K,...,K\right) $ is the
pullback of the closed form on $M$, which by abuse of notation we also
denote $\phi \left( K\right) $. The number $\int_{M}{\phi \left( K\right) }$
is the characteristic number associated to $\phi $: 
\begin{equation*}
C_{\phi }:=\int_{M}{\phi \left( K\right) },
\end{equation*}%
which is independent of the metric (and curvature) on $M$. In particular, we
will use $K_{t}$, the Riemannian curvature two-form with respect to the
metric $g_{t}$ defined in Section \ref{FirstModificationSubsection}. Let 
\begin{equation*}
\omega _{t}=\phi \left( K_{t},...,K_{t}\right) .
\end{equation*}%
As in the previous sections, we fix an appropriate $\varepsilon >0$ and let $%
T_{\varepsilon }\Sigma $ denote the tubular neighborhood of the singular
stratum $\Sigma $ of $\mathcal{F}$, on which $X$ generates a $g_{t}$%
-isometric flow. Then, using Stoke's Theorem and $\omega _{t}=d\eta _{t}$ in 
$T_{\varepsilon }\Sigma $ from (\ref{omegaIsDeta}), 
\begin{eqnarray*}
C_{\phi } &=&\int_{M\setminus T_{\varepsilon }\Sigma }\omega
_{t}+\int_{T_{\varepsilon }\Sigma }\omega _{t} \\
&=&\int_{M\setminus T_{\varepsilon }\Sigma }\omega _{t}+\lim_{\delta
\rightarrow 0}\int_{T_{\varepsilon }\Sigma -T_{\delta }\Sigma }\omega _{t} \\
&=&\int_{M\setminus T_{\varepsilon }\Sigma }\omega _{t}+\int_{\partial
T_{\varepsilon }\Sigma }\eta _{t}-\lim_{\delta \rightarrow 0}\int_{\partial
T_{\delta }\Sigma }\eta _{t}
\end{eqnarray*}%
As $t\rightarrow 0$, by Proposition \ref{OutsideTubeGoesToZero}, Proposition %
\ref{etaEpsilonGoesToZero}, we obtain the following.

\begin{lemma}
\label{LocalizationLemma}With the notation above, the characteristic number
associated to $\phi $ satisfies 
\begin{equation*}
C_{\phi }=-\lim_{\delta \rightarrow 0}\int_{\partial T_{\delta }\Sigma }\eta
_{t}=-\lim_{\delta \rightarrow 0}\int_{\partial T_{\delta }\Sigma }\eta _{0},
\end{equation*}%
where for small $\delta $, the restriction of $\eta _{t}$ to $\partial
T_{\delta }\Sigma $ does not depend on $t$ and corresponds to the fixed
metric $g$.
\end{lemma}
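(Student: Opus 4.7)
The plan is to take the three-term Stokes identity displayed just above the statement,
\[
C_\phi = \int_{M\setminus T_\varepsilon \Sigma}\omega_t + \int_{\partial T_\varepsilon \Sigma}\eta_t - \lim_{\delta\to 0}\int_{\partial T_\delta\Sigma}\eta_t,
\]
valid for every $t>0$, and send $t\to 0$. The first term on the right tends to zero by Proposition \ref{OutsideTubeGoesToZero}, and the second tends to zero by Proposition \ref{etaEpsilonGoesToZero}. Since $C_\phi$ is a fixed topological/metric-independent number, the third term must therefore have a well-defined $t\to 0$ limit equal to $-C_\phi$.

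The remaining task is to identify this limit with $-\lim_{\delta\to 0}\int_{\partial T_\delta\Sigma}\eta_0$. The key observation is that on the smaller tube $T_{\varepsilon/3}\Sigma$ the modified metric $g_t$ actually coincides with the original metric $g$: by construction, $f(r)\equiv 0$ for $r\le \varepsilon/3$, so $\rho(t,r)=1$ there, and hence $g_t=g$. Consequently, on this smaller tube the Levi-Civita connection $\nabla^{g_t}$, the curvature two-form $K_t$, and the endomorphism $L_t=\mathcal L_X-\nabla^{g_t}_X$ all agree with their $g$-counterparts and are in particular independent of $t$. The one-form $\alpha$ is also $t$-independent on this tube, because $X\in T\mathcal F$ gives $\langle X,Y\rangle_t=\rho(t,r)\langle X,Y\rangle$ for every $Y$, and the scalar $\rho(t,r)$ cancels in the quotient defining $\alpha$. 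Therefore each form $\phi^j_{K,t}$ and hence $\eta_t$ itself agrees on $\partial T_\delta\Sigma$ with its $t=0$ value (that is, the form $\eta_0$ computed from $g$) whenever $\delta<\varepsilon/3$.

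With this in hand, the inner limit $\lim_{\delta\to 0}\int_{\partial T_\delta\Sigma}\eta_t$ (taken through $\delta<\varepsilon/3$) is the same number for every $t>0$, namely $\lim_{\delta\to 0}\int_{\partial T_\delta\Sigma}\eta_0$. No delicate interchange of limits is needed: the outer limit $t\to 0$ is trivial on a $t$-independent quantity. Combining the three pieces gives the claimed equality.

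The only real point that demands care is verifying the $t$-independence of $\eta_t$ near $\Sigma$; this is the main (and essentially only) obstacle, and it reduces to checking that each ingredient in the definition of $\eta_t$ (the form $\alpha$, the curvature $K_t$, and the endomorphism $L_t$) is built entirely from $g_t|_{T_{\varepsilon/3}\Sigma}=g|_{T_{\varepsilon/3}\Sigma}$. Once this is recorded, the lemma follows immediately from the Stokes identity together with Propositions \ref{OutsideTubeGoesToZero} and \ref{etaEpsilonGoesToZero}.
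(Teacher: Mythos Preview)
Your proposal is correct and follows exactly the paper's approach: the Stokes identity displayed before the lemma, together with Propositions \ref{OutsideTubeGoesToZero} and \ref{etaEpsilonGoesToZero} to kill the first two terms as $t\to 0$, and the observation that $g_t=g$ on $T_{\varepsilon/3}\Sigma$ (since $f(r)=0$ there) so that $\eta_t$ is $t$-independent on $\partial T_\delta\Sigma$ for small $\delta$. Your extra remark that $\alpha$ is globally $t$-independent (via cancellation of $\rho$) matches the paper's display $\alpha(Y)=\langle X,Y\rangle/\langle X,X\rangle=\langle X,Y\rangle_t/\langle X,X\rangle_t$, though for the lemma itself the simpler $g_t|_{T_{\varepsilon/3}\Sigma}=g$ already suffices.
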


\vspace{1pt}We now introduce the notation of the main theorem, much of which
is similar to that in \cite[Section 1]{BaumCh}. Given any invertible linear
transformation $A\in \mathfrak{o}\left( 2s\right) $, there exists an
orthonormal basis $\left\{ e_{1},...,e_{2s}\right\} $ for $\mathbb{R}^{2s}$
such that $Ae_{2j-1}=\lambda _{j}e_{2j}$ and $Ae_{2j}=-\lambda _{j}e_{2j-1}$
and $\lambda _{j}\geq 0$ for each $j$. The numbers $\lambda _{j}$ are called 
\textbf{skeigen-values}. It is well-known that if $\psi $ is an $\mathrm{ad}%
\left( \mathrm{SO}\left( 2s\right) \right) $-invariant symmetric
complex-valued polynomial on $\mathfrak{o}\left( 2s\right) $, there exists a
unique polynomial $\widehat{\psi }:\mathbb{R}^{s+1}\rightarrow \mathbb{C}$
such that%
\begin{equation*}
\psi \left( A\right) =\widehat{\psi }\left( \lambda _{1},...,\lambda
_{s}\right) .
\end{equation*}%
for any such transformation $A$. The Pfaffian $\chi \left( A\right) $ of $A$
is a particular example; $\chi \left( A\right) =\widehat{\chi }\left(
\lambda _{1},...,\lambda _{s}\right) =\pm \lambda _{1}...\lambda _{s}$,
where the positive sign is chosen exactly when $e_{1},...,e_{2m}$ is a
positively oriented basis of $\mathbb{R}^{2s}$.

As we have seen in the proof of Lemma \ref{isometricDiskLemma} and Corollary %
\ref{isometricModificationCorollary}, the given nondegenerate transverse
Killing field $X$ with singular set $\Sigma $, its linearization restricts
to each $N_{x}\Sigma $ to be a Killing field. The restriction of its Lie
derivative to $N_{x}\Sigma $ is a nonsingular skew-symmetric automorphism $%
P_{x}\left( \left. \mathcal{L}_{X}\right\vert _{\Gamma \left( N\Sigma
\right) _{x}}\right) $, where $P_{x}:T_{x}M\rightarrow N_{x}\Sigma $ is the
orthogonal projection. Further we multiply the endomorphism by a positive
scalar $c_{x}$ so that the resulting skeigen-values $\left\{ \alpha
_{j}\right\} $ satisfy $\sum \alpha _{j}^{2}=1$ and each $\alpha _{j}$ is
nonzero. Let $\Lambda _{X}^{\nu }=c_{x}P_{x}\left( \left. \mathcal{L}%
_{X}\right\vert _{\Gamma \left( N\Sigma \right) _{x}}\right) $. We extend $%
\Lambda _{X}^{\nu }$ by zero on $T\Sigma $ to define the endomorphism $%
\Lambda _{X}:\left. TM\right\vert _{\Sigma }\rightarrow \left. TM\right\vert
_{\Sigma }$. By the results of Section \ref{StrTubNbhdSection}, and in
particular Corollary \ref{RIgidityIsomFlowsCorollary}, the skeigen-values of 
$\Lambda _{X}$ and of $\Lambda _{X}^{\nu }$ (i.e. the nonzero skeigen-values
of $\Lambda _{X}$) are constant on each connected component of $\Sigma $.
Let $\mu _{0}=0,$ $\mu _{1},...,\mu _{\tau }$ be the distinct skeigen-values
of $\Lambda _{X}$. Furthermore, $\left. TM\right\vert _{\Sigma }$ is the
direct sum of skeigen-bundles $\left. TM\right\vert _{\Sigma }=E_{0}\oplus
E_{1}\oplus ...\oplus E_{\tau }$, where $E_{0}=T\Sigma $ and%
\begin{equation*}
\left( E_{j}\right) _{x}=-\mu _{j}^{2}\text{ eigenspace of }\left( \Lambda
_{X}\right) _{x}^{2}
\end{equation*}%
For each $j\geq 1$, $E_{\lambda _{j}}$ can be endowed with the complex
structure $\left. \frac{1}{\mu _{j}^{2}\text{ }}\left( \Lambda _{X}\right)
^{2}\right\vert _{E_{\mu _{j}}}$ with induced orientation. We orient $%
E_{0}=T\Sigma $ so that the orientation agrees with the induced orientation
from $TM$. We set the real fiber dimension of $E_{j}$ to be $2m_{j}$, so
that $\sum_{j=0}^{\tau }m_{j}=m=\frac{1}{2}\dim M$. We now introduce forms $%
a_{j}$; in the case where $E_{0}$,..., $E_{\tau }$ 
are direct sums of line bundles, they are the first Chern forms (or, classes if considered
as elements of $H^{\ast }\left( \Sigma \right) $) of the line
bundle components. In general, let $%
a_{1},...,a_{m}$ be such that

\begin{enumerate}
\item The $i^{\text{th}}$ Pontryagin class of $E_{0}$ is the $i^{\text{th}}$
symmetric function of $a_{1}^{2},...,a_{m_{0}}^{2}$, and its Euler class is $%
a_{1}...a_{m_{0}}$.

\item For $i=1,...,\tau $, the $k^{\text{th}}$ Chern class of $E_{i}$ the $%
k^{\text{th}}$ elementary symmetric function of those $a_{j}^{2}$ such that $%
m_{0}+...+m_{i-1}+1\leq j\leq m_{0}+...+m_{i}$.
\end{enumerate}

Let $\lambda _{1},...,\lambda _{m}$ be the list of real numbers $\underset{%
m_{0}\text{ times}}{\underbrace{0,...,0}},\underset{m_{1}\text{ times}}{%
\underbrace{\mu _{1},...,\mu _{1}}},$ $...,\underset{m_{\tau }\text{ times}}{%
\underbrace{\mu _{\tau },...,\mu _{\tau }}}$, so that they are the
skeigen-values of $\Lambda _{X}$. We define%
\begin{equation*}
\psi \left( \Lambda _{X}\right) :=\widehat{\psi }\left( \lambda
_{1}+a_{1},...,\lambda _{m}+a_{m}\right) .
\end{equation*}%
One specific example we will use is%
\begin{equation*}
\chi \left( \Lambda _{X}^{\nu }\right) =\left( \lambda
_{m_{0}+1}+a_{m_{0}+1}\right) ...\left( \lambda _{m}+a_{m}\right) .
\end{equation*}

There is a technical change we need to make in the case $\Sigma $ is a
point, in which case $TM=E_{1}\oplus ...\oplus E_{\tau }$, and it may be the
case that the orientation induced from the complex structures on the $E_{j}$
does not produce the given orientation of $TM$. In this case, we instead let%
\begin{equation*}
\psi \left( \Lambda _{X}\right) :=\widehat{\psi }\left( -\lambda
_{1}-a_{1},\lambda _{2}+a_{2},...,\lambda _{m}+a_{m}\right) .
\end{equation*}

\begin{theorem}
\label{mainTheorem}Let $\left( M,g\right) $ be a compact, oriented
Riemannian manifold of dimension $2m$ that is endowed with an oriented
singular Riemannian foliation $\mathcal{F}$. Let $X$ be a nondegenerate
transverse Killing vector field on $M$ whose span is $T\mathcal{F}$. Let $%
\phi $ be an $\mathrm{ad}\left( \mathrm{SO}\left( 2m\right) \right) $%
-invariant symmetric form of degree $m$ on $\mathfrak{o}\left( 2m\right) $.
Then the characteristic number ${\phi }\left( M\right) $ defined by $\phi $
satisfies%
\begin{equation*}
{\phi \left( M\right) }=\sum_{j}\frac{{\phi \left( \Lambda _{X}\right) }}{{%
\chi \left( \Lambda _{X}^{\nu }\right) }}\left[ \Sigma _{j}\right] ,
\end{equation*}%
where $\Sigma _{j}$ are the connected components of the singular stratum $%
\Sigma $ of $\mathcal{F}$.

\begin{proof}
The vector field $X$ globally generates a singular Riemannian flow. For $p$
near $\Sigma $, we replace $X$ with $\widetilde{X}=\frac{d}{dt}\left. \exp
^{\bot }\left( \exp \left( t\Lambda _{X}^{\nu }\right) \left( \exp ^{\bot
}\right) ^{-1}\left( p\right) \right) \right\vert _{t=0}$ where $\exp ^{\bot
}:N\Sigma \rightarrow M$ is the normal exponential map and $\exp :\mathfrak{%
so}\left( 2k\right) \rightarrow SO\left( 2k\right) $ is the Lie group
exponential with $2k=2m-\dim \Sigma $. The flow of this vector field is the
same as the flow of $X$, and in the metric $g_{1}$ from Theorem \ref%
{TubularRiemFlowIsIsometric}, $\widetilde{X}$ is an isometric flow near $%
\Sigma $. Lemma \ref{LocalizationLemma} shows that we need only calculate
each%
\begin{equation*}
-\lim_{\delta \rightarrow 0}\int_{\partial T_{\delta }\Sigma _{j}}\eta _{1}~.
\end{equation*}%
\newline
We refer to \cite[proof of Theorem C]{BaumCh} for the calculation of the
residue, where the calculation is local and only uses the fact $\widetilde{X}
$ is Killing in the small tubular neighborhood. Since the final formula of
the limit is the same for both $X$ and $\widetilde{X}$, the result follows.
\end{proof}
\end{theorem}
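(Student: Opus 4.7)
The plan is to localize the computation of $C_{\phi} = \phi(M)$ to arbitrarily small tubular neighborhoods of the components $\Sigma_j$ of the singular stratum, and then apply the local residue calculation of Baum--Cheeger at each component. First I would invoke Lemma \ref{LocalizationLemma}, which already yields
\begin{equation*}
C_{\phi} = -\sum_j \lim_{\delta \to 0}\int_{\partial T_{\delta}\Sigma_j} \eta_0,
\end{equation*}
and which guarantees that the integrand on $\partial T_{\delta}\Sigma_j$ is independent of the deformation parameter $t$. The remaining work is therefore purely local: for each connected component $\Sigma_j$ one must identify the limit on the right with the claimed residue $\tfrac{\phi(\Lambda_X)}{\chi(\Lambda_X^{\nu})}[\Sigma_j]$.

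The crucial reduction is to replace $X$ near $\Sigma_j$ by a vector field $\widetilde{X}$ that is a \emph{genuine} Killing field for a suitable Riemannian metric $g_1$. Theorem \ref{TubularRiemFlowIsIsometric} and Corollary \ref{isometricModificationCorollary} already supply such a metric $g_1$, for which $\mathcal{F}$ restricts to an isometric flow on a tube around $\Sigma_j$. I take $\widetilde{X}$ to be the infinitesimal generator of the linear $S^1$-action on $N\Sigma_j$ coming from $\exp(t\Lambda_X^{\nu})$, transported to $M$ via the normal exponential map:
\begin{equation*}
\widetilde{X}(p) = \left.\frac{d}{dt}\right|_{t=0} \exp^{\bot}\!\left(\exp(t\Lambda_X^{\nu})\,(\exp^{\bot})^{-1}(p)\right).
\end{equation*}
By the structure theorem of Section \ref{StrTubNbhdSection}, both $X$ and $\widetilde{X}$ generate one-parameter subgroups of the same torus action on $N\Sigma_j$ with the same skeigen-values, so they generate the \emph{same} oriented singular flow on a tubular neighborhood of $\Sigma_j$; in particular they have the same $\Lambda^{\nu}$ and therefore the same residue integrand. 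Crucially, $\widetilde{X}$ is a bona fide Killing field for $g_1$.

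With a genuine Killing field in hand near $\Sigma_j$, the final step is to invoke the local residue computation of \cite[proof of Theorem C]{BaumCh}, which evaluates $\lim_{\delta \to 0}\int_{\partial T_{\delta}\Sigma_j}\eta_1$ in terms of the skeigen-values of $\Lambda_{\widetilde{X}}^{\nu} = \Lambda_X^{\nu}$ and the Chern/Pontryagin/Euler data of the skeigen-bundle decomposition $\left.TM\right|_{\Sigma_j} = E_0 \oplus E_1 \oplus \cdots \oplus E_{\tau}$, producing exactly $-\tfrac{\phi(\Lambda_X)}{\chi(\Lambda_X^{\nu})}[\Sigma_j]$. Their calculation uses only that the vector field is Killing in the tube and the normal-bundle linear-action structure, both of which have been arranged. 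Summing over $j$ gives the theorem. The principal obstacle I anticipate is the careful justification that the passage $X \leadsto \widetilde{X}$ does not alter the limit in Lemma \ref{LocalizationLemma}; this reduces to showing that the form $\eta_t$ on $\partial T_{\delta}\Sigma_j$ depends, in the $\delta \to 0$ limit, only on the infinitesimal data $(\Lambda_X^{\nu}, g_1|_{\Sigma_j})$, which is immediate since $X$ and $\widetilde{X}$ have the same one-jet along $\Sigma_j$ and generate the same flow on the tube.
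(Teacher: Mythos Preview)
Your proposal is correct and follows essentially the same approach as the paper: you invoke Lemma~\ref{LocalizationLemma} to localize, replace $X$ near each $\Sigma_j$ by the linearized Killing field $\widetilde{X}$ (defined via $\exp^{\bot}$ and $\exp(t\Lambda_X^{\nu})$) for the metric $g_1$ of Theorem~\ref{TubularRiemFlowIsIsometric}, and then appeal to the local residue computation in \cite{BaumCh}. Your treatment is, if anything, slightly more explicit than the paper's about why the passage $X\leadsto\widetilde{X}$ leaves the limiting integral unchanged.
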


\begin{remark}
For the special case where $\Sigma _{j}$ is an isolated fixed point $p$, 
\begin{eqnarray*}
\frac{{\phi \left( \Lambda _{X}\right) }}{{\chi \left( \Lambda _{X}^{\nu
}\right) }}\left[ \Sigma _{j}\right]  &=&\frac{{\phi \left( \Lambda
_{X}\right) }}{{\chi \left( \Lambda _{X}^{\nu }\right) }}\left( p\right)  \\
&=&\frac{\widehat{\psi }\left( \lambda _{1}+a_{1},\lambda
_{2}+a_{2},...,\lambda _{m}+a_{m}\right) }{\left( \lambda _{1}+a_{1}\right)
\left( \lambda _{2}+a_{2}\right) ...\left( \lambda _{m}+a_{m}\right) }\left(
p\right) .
\end{eqnarray*}
\end{remark}

\begin{remark}
The theorem above can easily be adapted to the case where the characteristic
numbers come from the curvature of a more general foliated vector bundle
over $M$. In this case, $X$ acts canonically on such a bundle.
\end{remark}

\begin{example}
\label{RP_example}The following singular foliation is from \cite[Section 3.4]%
{Royo-P}. Consider the foliation on $S^{4}$ defined as follows. Let $%
v=\left( 
\begin{array}{c}
\alpha \\ 
\beta%
\end{array}%
\right) $ be an eigenvector of a symmetric matrix $B\in SL\left( 2,\mathbb{Z}%
\right) $ with positive irrational eigenvalues. We consider $S^{4}$ to be a
suspension of $S^{3}\subseteq \mathbb{C}^{2}$, and we foliate each $S^{3}$
by the curves $t\mapsto \left( \exp \left( it\alpha \right) z_{1},\exp
\left( it\beta \right) z_{2}\right) $. This nonsingular isometric flow on $%
S^{3}$ extends to an isometric flow of $S^{4}$, with two fixed points at the
poles. Note that each generic leaf closure of the flow is a two-dimensional
torus. A tubular neighborhood of such a torus is isometric to a solid torus
of the form $D^{2}\times T^{2}$, where $D^{2}$ is a two-dimensional disk,
and where the boundary of this tube is a (rectangular) $3$-torus $%
S^{1}\times T^{2}$. Choose two tubes $\mathrm{Tube}_{1}$ and $\mathrm{Tube}%
_{2}$ like this inside $S^{4}$ that are isometric and disjoint. We glue the
two boundary components of $S^{4}\setminus \left\{ \mathrm{Tube}_{1}\cup 
\mathrm{Tube}_{2}\right\} $ via the $3\times 3$ matrix $\left( 
\begin{array}{cc}
1 & 0 \\ 
0 & B%
\end{array}%
\right) $, which is a foliated diffeomorphism between the boundary
components. This is equivalent to attaching a handle. The result is the
manifold%
\begin{equation*}
M=\left\{ S^{4}\setminus \left\{ \mathrm{Tube}_{1}\cup \mathrm{Tube}%
_{2}\right\} \right\} /\sim ~,
\end{equation*}%
where the equivalence relation $\sim $ is given by the gluing map described
above. For a small interval $I$, we use the product metric on $\left(
\partial \mathrm{Tube}_{1}\right) \times I\cong \left( \partial \mathrm{Tube}%
_{2}\right) \times I$, and using a basic partition of unity (a partition of
unity that is constant on the leaves) we patch this to the original metric
on $S^{4}\setminus \left\{ \mathrm{Tube}_{1}\cup \mathrm{Tube}_{2}\right\} $
using Lemma \ref{patchingLemma}. The original foliation induces a singular
Riemannian flow on $M$ with this metric. It was shown in \cite[Section 3.4]%
{Royo-P} that this flow is not isometric. In fact, we certainly could attach
more handles as desired. In any case, we could now compute for example the
Euler characteristic of this manifold using Theorem~\ref{mainTheorem}. The
residue at each pole is 1, so that 
\begin{equation*}
\chi \left( M\right) =1+1=2.
\end{equation*}%
The same result could be obtained from the Hopf index theorem.
\end{example}

\begin{example}
\label{projective_example}Consider the manifold $\mathbb{C}P^{m}$, with
homogeneous coordinates $\left[ z_{0},...,z_{m}\right] $. Consider the
isometric flow parametrized by the curves $t\mapsto \left[ z_{0},\exp \left(
it\alpha _{1}\right) z_{1},...,\exp \left( it\alpha _{m}\right) z_{m}\right] 
$, where $\left( \alpha _{1},...,\alpha _{m}\right) $ is an eigenvector of a
specific matrix $A\in SL\left( m,\mathbb{Z}\right) $, where $\left\{ \alpha
_{1},...,\alpha _{m}\right\} $ is linearly independent over $Q$. This
isometric flow has $m+1$ fixed points $\left[ 1,0,...,0\right] $, $\left[
0,1,0,...,0\right] $, ...,$\left[ 0,0,...,0,1\right] $. Similar to the last
example, we note that generic leaf closures of the flow are $m$-dimensional
tori. A tubular neighborhood of such a torus is isometric to a tube of the
form $D^{m}\times T^{m}$, where $D^{m}$ is a $m$-dimensional disk, and where
the boundary of this tube is of the form $S^{m-1}\times T^{m}$. Choose two
tubes $\mathrm{Tube}_{1}$ and $\mathrm{Tube}_{2}$ like this inside $\mathbb{C%
}P^{m}$ that are isometric and disjoint. We glue the two boundary components
of $\mathbb{C}P^{m}\setminus \left\{ \mathrm{Tube}_{1}\cup \mathrm{Tube}%
_{2}\right\} $ via the map $\mathrm{id}\times $ $A$, which is a foliated
diffeomorphism between the boundary components. The result is the manifold%
\begin{equation*}
M=\left\{ \mathbb{C}P^{m}\setminus \left\{ \mathrm{Tube}_{1}\cup \mathrm{Tube%
}_{2}\right\} \right\} /\sim ~,
\end{equation*}%
where the equivalence relation $\sim $ is given by the gluing map described
above. For a small interval $I$, we use the product metric on $\left(
\partial \mathrm{Tube}_{1}\right) \times I\cong \left( \partial \mathrm{Tube}%
_{2}\right) \times I$, and using a basic partition of unity we patch this to
the original metric on $\mathbb{C}P^{m}\setminus \left\{ \mathrm{Tube}%
_{1}\cup \mathrm{Tube}_{2}\right\} $ using Lemma \ref{patchingLemma}. The
original foliation induces a singular Riemannian flow on $M$ with this
metric. Similar to what is shown in \cite[Section 3.4]{Royo-P}, we can see
that this flow is not isometric. However, we may apply Theorem~\ref%
{mainTheorem} to compute the signature of $M$. On a small neighborhood of
each singular point $[0,\ldots ,z_{j}=1,0,\ldots ,0]$, the foliation has the
form of the flow 
\begin{equation*}
(z_{0},z_{1},\ldots ,\widehat{z_{j}},\ldots ,z_{m})\mapsto (\exp (-it\alpha
_{j})z_{0},\exp (it(\alpha _{1}-\alpha _{j}))z_{1},\ldots ,\widehat{z_{j}}%
,\ldots ,\exp (it(\alpha _{m}-\alpha _{j}))z_{m}),
\end{equation*}%
letting $\alpha _{0}=0$. Then the residue calculation for the signature
gives (see Corollary~\ref{SignatureCor} below), 
\begin{equation*}
\sigma (M)=\sum_{j=0}^{m}\prod_{i=0,\neq j}^{m}\mathrm{sgn}\left( \alpha
_{i}-\alpha _{j}\right) =\left\{ 
\begin{array}{ll}
1\quad & \text{if $m$ is even,} \\ 
0\quad & \text{if $m$ is odd}%
\end{array}%
\right. .
\end{equation*}%
We see that the surgery did not alter the signature.
\end{example}

\section{The non-orientable case.}

\vspace{0in}Even if the closed manifold $M$ is orientable, it is possible
that there exists a singular Riemannian foliation on it that is not
orientable; see Example \ref{nonorientable_example}. In this case, it is
easy to modify the argument of the paper to compute characteristic numbers
of the manifold using a calculation at the singular stratum of the foliation.

\begin{lemma}
\label{doubleCoverLemma}Let $\left( M,\mathcal{F},g\right) $ be a
non-orientable, one-dimensional singular Riemannian foliation of an
oriented, Riemannian manifold $M$. Then there exists an oriented singular
Riemannian flow $\left( \widetilde{M},\widetilde{\mathcal{F}},\pi ^{\ast
}g\right) $ on a double cover $\pi :\widetilde{M}\rightarrow M$ such that
the regular leaves of $\widetilde{\mathcal{F}}$ are double covers of the
regular leaves of $\mathcal{F}$.
\end{lemma}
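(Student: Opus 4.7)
The plan is to construct $\widetilde{M}$ as the orientation double cover of the tangent line bundle $T\mathcal{F}$ (defined on the regular stratum) and then extend it across the singular stratum. Over $M_{0} := M \setminus \Sigma$, set
\[
\widetilde{M}_{0} = \{(x, o) : x \in M_{0},\ o \text{ is an orientation of } T_{x}\mathcal{F}\},
\]
and let $p \colon \widetilde{M}_{0} \to M_{0}$ be the obvious projection. Because $T\mathcal{F}$ is assumed non-orientable, $\widetilde{M}_{0}$ is a connected smooth $2$-to-$1$ cover, and the pulled-back line bundle $p^{\ast}T\mathcal{F}$ carries a tautological global orientation.

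The key step is to extend $p$ across each point $x_{0} \in \Sigma$. For this I would show that some neighborhood $U$ of $x_{0}$ in $M$ has the property that $T\mathcal{F}$ is orientable on $U \setminus (U \cap \Sigma)$; granting this, $p^{-1}(U \setminus (U \cap \Sigma))$ splits into two disjoint sheets, and we extend $p$ by adjoining two copies of $U \cap \Sigma$, yielding $\pi^{-1}(U) \cong U \sqcup U$. To establish local orientability, fix a contractible neighborhood $V$ of $x_{0}$ inside $\Sigma_{j}$ and apply Lemma \ref{isometricDiskLemma} together with the analysis of Section \ref{StrTubNbhdSection}: although the globally oriented versions of the structure theorems cannot be invoked directly, on the simply connected base $V$ one can fix a sign and realize the infinitesimal flow on $N\Sigma_{j}|_{V}$ as the orbits of the standard linear flow \eqref{formOfIsometricFlow}, generated by a smooth vector field that is nonvanishing off the zero section. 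Pushing this forward through $\exp^{\bot}$ gives a nowhere-zero trivializing section of $T\mathcal{F}|_{U \setminus (U \cap \Sigma)}$.

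With the double cover $\pi \colon \widetilde{M} \to M$ in hand, I pull back the metric as $\pi^{\ast}g$ (Riemannian since $\pi$ is a local diffeomorphism), and define $\widetilde{\mathcal{F}}$ to be the foliation whose leaves are the connected components of $\pi^{-1}(L)$ as $L$ ranges over leaves of $\mathcal{F}$. Because $\pi$ is a foliated local isometry, $(\widetilde{M}, \widetilde{\mathcal{F}}, \pi^{\ast}g)$ inherits the Stefan--Sussmann smoothness and the bundle-like/equidistant-leaf property of $(M, \mathcal{F}, g)$, and is thus a singular Riemannian flow. Orientedness of $\widetilde{\mathcal{F}}$ is immediate from the tautological orientation of $T\widetilde{\mathcal{F}} = \pi^{\ast}T\mathcal{F}$, and for every regular leaf $L$ of $\mathcal{F}$ the preimage $\pi^{-1}(L) \to L$ is a $2$-fold cover by corresponding regular leaves of $\widetilde{\mathcal{F}}$.

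The main obstacle is establishing the local orientability of $T\mathcal{F}$ near $\Sigma$. If $\Sigma_{j}$ has codimension $\geq 4$, then a punctured ball $U \setminus (U \cap \Sigma_{j})$ is simply connected and there is nothing to prove; the codimension-$2$ case is the delicate one, where $\pi_{1}(U \setminus (U \cap \Sigma_{j})) = \mathbb{Z}$ is generated by a meridian and one must use the explicit model $V(z) = i\alpha z$ on $\mathbb{C}$, a globally defined nonvanishing vector field on $\mathbb{C}\setminus\{0\}$, to see that the monodromy of $T\mathcal{F}$ around the meridian is trivial.
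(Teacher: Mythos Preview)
Your construction is the same as the paper's: both build the leafwise-orientation double cover by pairing foliation charts $U$ with a choice of orientation $o(U)$ of $T\mathcal{F}|_{U\cap M_0}$, which is exactly your $\widetilde{M}_0$ extended across $\Sigma$ by the local trivial-cover splitting. The paper's proof simply asserts in one line that each foliation chart admits exactly two leafwise orientations, whereas you supply the justification (via Lemma~\ref{isometricDiskLemma} and the codimension-$2$ meridian check) that $T\mathcal{F}$ is locally orientable near $\Sigma$; your argument is correct once you note that the proof of Lemma~\ref{isometricDiskLemma} does not use the global orientability hypothesis.
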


\begin{proof}
Every foliation chart $U\subseteq M$ contains a dense subset saturated by
one-dimensional plaques of $\mathcal{F}$. Thus, $U$ can be endowed with one
of two possible leafwise orientations. We construct the foliation $\left( 
\widetilde{M},\widetilde{\mathcal{F}}\right) $ by making the foliation
charts from sets of the form $U\times o\left( U\right) $, where $U$ is a
foliation chart and $o\left( U\right) $ is a choice of leafwise orientation
of $U$. The plaques are the subsets of the form $P\times o\left( U\right) $,
where $P$ is a plaque of $\mathcal{F}$ in $U$. The properties follow easily
from the definition, and the orientations $o\left( U\right) $ produce the
leafwise orientation of $\widehat{\mathcal{F}}$.
\end{proof}

The fact that characteristic numbers are multiplicative over finite covers
yields the following.

\begin{theorem}
\label{nonorientableCase}\vspace{0in}Let $\left( M,\mathcal{F},g\right) $ be
a non-orientable, one-dimensional singular Riemannian foliation of a
compact, oriented, Riemannian manifold $M$ of dimension $2m$. Let $\left( 
\widetilde{M},\widetilde{\mathcal{F}},\pi ^{\ast }g\right) $ be the
corresponding double cover as in Lemma \ref{doubleCoverLemma}. Let $X$ be a
nondegenerate transverse Killing vector field on $\widetilde{M}$ whose span
is $T\widetilde{\mathcal{F}}$. Let $\phi $ be an $\mathrm{ad}\left( \mathrm{%
SO}\left( 2m\right) \right) $-invariant symmetric form of degree $m$ on $%
\mathfrak{o}\left( 2m\right) $. Then the characteristic number ${\phi }%
\left( M\right) $ defined by $\phi $ satisfies%
\begin{equation*}
{\phi \left( M\right) }=\frac{1}{2}\sum_{j}\frac{{\phi \left( \Lambda
_{X}\right) }}{{\chi \left( \Lambda _{X}^{\nu }\right) }}\left[ \widetilde{%
\Sigma _{j}}\right] ,
\end{equation*}%
where $\widetilde{\Sigma _{j}}$ are the connected components of the singular
stratum $\widetilde{\Sigma }$ of $\widetilde{\mathcal{F}}$.
\end{theorem}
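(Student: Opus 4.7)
The plan is to reduce directly to Theorem \ref{mainTheorem} via the double cover $\pi:\widetilde{M}\to M$ supplied by Lemma \ref{doubleCoverLemma}, and then to invoke the multiplicativity of characteristic numbers under finite covers. First I would verify that $(\widetilde{M},\widetilde{\mathcal{F}},\pi^{\ast}g)$ satisfies the hypotheses of Theorem \ref{mainTheorem}: namely that $\widetilde{M}$ is compact (as a finite cover of the compact $M$), of even dimension $2m$, and carries the pullback orientation from $M$; the foliation $\widetilde{\mathcal{F}}$ is an oriented singular Riemannian foliation by construction of the double cover, and $X$ is assumed to be a nondegenerate transverse Killing field spanning $T\widetilde{\mathcal{F}}$.

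Next I would apply Theorem \ref{mainTheorem} to $(\widetilde{M},\widetilde{\mathcal{F}},\pi^{\ast}g,X)$ and the chosen $\mathrm{ad}(\mathrm{SO}(2m))$-invariant form $\phi$, obtaining
\begin{equation*}
\phi\bigl(\widetilde{M}\bigr)=\sum_{j}\frac{\phi\left(\Lambda _{X}\right)}{\chi\left(\Lambda _{X}^{\nu }\right)}\bigl[\widetilde{\Sigma _{j}}\bigr],
\end{equation*}
where $\widetilde{\Sigma _{j}}$ runs over the components of the singular stratum of $\widetilde{\mathcal{F}}$. Then I would use the standard multiplicativity of characteristic numbers under finite covers: since $K_{\pi^{\ast}g}=\pi^{\ast}K_{g}$ and the local volume form satisfies $d\mathrm{vol}_{\pi^{\ast}g}=\pi^{\ast}d\mathrm{vol}_{g}$, the characteristic form on $\widetilde{M}$ is the pullback of the one on $M$, and integration over a $2$-fold cover gives $\phi\bigl(\widetilde{M}\bigr)=2\,\phi(M)$. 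Dividing by $2$ yields the stated formula.

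The only step that requires a small amount of care, and which I would treat as the main (if modest) obstacle, is ensuring that the objects and orientations involved in the application of Theorem \ref{mainTheorem} on $\widetilde{M}$ are compatible with the double-cover construction of Lemma \ref{doubleCoverLemma}. In particular, one should check that the leafwise orientation of $\widetilde{\mathcal{F}}$ is globally well-defined (which it is, by the construction via charts $U\times o(U)$), that $\widetilde{M}$ inherits a global manifold orientation from $M$ via $\pi$, and that the resulting nondegenerate transverse Killing field $X$ indeed has $\Lambda_X$ and $\Lambda_X^\nu$ defined on each $\widetilde{\Sigma_j}$ exactly as in the orientable case, so that the residue formula applies verbatim on the cover. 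Once these compatibilities are recorded, the theorem follows immediately.
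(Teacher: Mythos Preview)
Your proposal is correct and follows exactly the approach the paper takes: the paper simply states that ``the fact that characteristic numbers are multiplicative over finite covers yields the following'' and presents the theorem without further proof, so your reduction to Theorem~\ref{mainTheorem} on the double cover together with $\phi(\widetilde{M})=2\,\phi(M)$ is precisely what is intended. Your additional care about orientations and compatibility on $\widetilde{M}$ is a reasonable expansion of what the paper leaves implicit.
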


\begin{corollary}
\label{EulerCor}Let $\left( M,\mathcal{F},g\right) $ be a possibly
nonorientable one-dimensional singular Riemannian foliation of a compact,
oriented, Riemannian manifold $M$ of dimension $2m$. Then the Euler
characteristic satisfies $\chi \left( M\right) =\sum_{j}\chi \left( \Sigma
_{j}\right) $, where the sum is over the components of the singular stratum
of the foliation.
\end{corollary}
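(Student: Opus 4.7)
The plan is to apply Theorem \ref{mainTheorem} (in the orientable case) or Theorem \ref{nonorientableCase} (in the nonorientable case) with the specific choice $\phi=\chi$, the Pfaffian polynomial on $\mathfrak{o}(2m)$. The associated characteristic number is then the Euler characteristic $\chi(M)$ by the generalized Gauss--Bonnet theorem. The whole argument is thus a residue computation followed by one more application of Gauss--Bonnet on each component $\Sigma_j$.

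First I would assume $\mathcal{F}$ is orientable and unpack the formula from Theorem \ref{mainTheorem}. With the notation from before the theorem, fix a component $\Sigma_j$ of dimension $2m_0$, and list the skeigen-values of $\Lambda_X$ as $\lambda_1=\cdots=\lambda_{m_0}=0$ and $\lambda_{m_0+1},\dots,\lambda_m$ nonzero. Then by definition
\begin{equation*}
\chi(\Lambda_X)=(\lambda_1+a_1)\cdots(\lambda_m+a_m)=a_1\cdots a_{m_0}\cdot(\lambda_{m_0+1}+a_{m_0+1})\cdots(\lambda_m+a_m),
\end{equation*}
while $\chi(\Lambda_X^{\nu})=(\lambda_{m_0+1}+a_{m_0+1})\cdots(\lambda_m+a_m)$. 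The nonzero factors cancel, leaving
\begin{equation*}
\frac{\chi(\Lambda_X)}{\chi(\Lambda_X^{\nu})}=a_1\cdots a_{m_0},
\end{equation*}
which by construction (property (1) in the definition of the $a_i$) is exactly the Euler form $e(T\Sigma_j)$ of $E_0=T\Sigma_j$ with its induced orientation. Evaluating on the fundamental class and using Gauss--Bonnet on $\Sigma_j$ gives $(\chi(\Lambda_X)/\chi(\Lambda_X^{\nu}))[\Sigma_j]=\int_{\Sigma_j}e(T\Sigma_j)=\chi(\Sigma_j)$. Summing over components yields $\chi(M)=\sum_j\chi(\Sigma_j)$.

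I would then check the edge case of an isolated zero ($m_0=0$, so $\Sigma_j$ is a point). In that case the ratio is just $1$, which equals $\chi(\text{point})$; in the orientation-modified variant $\widehat{\chi}(-\lambda_1-a_1,\lambda_2+a_2,\dots)$ from the theorem, both numerator and denominator flip sign simultaneously (the Pfaffian normalization of $\Lambda_X^{\nu}$ is governed by the same choice of orientation of $TM$ at the point), so the ratio remains $1$ and the contribution is still $\chi(\Sigma_j)$.

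Finally I would dispose of the nonorientable case by lifting to the oriented double cover $\pi:\widetilde{M}\to M$ provided by Lemma \ref{doubleCoverLemma}. Since Euler characteristic is multiplicative under finite covers, $\chi(\widetilde{M})=2\chi(M)$, and likewise, for each component $\Sigma_j\subseteq \Sigma$, the preimage $\pi^{-1}(\Sigma_j)$ is a disjoint union of components $\widetilde{\Sigma_k}$ of $\widetilde{\Sigma}$ whose total Euler characteristic equals $2\chi(\Sigma_j)$. Applying the orientable case to $\widetilde{M}$ and then Theorem \ref{nonorientableCase}, or equivalently dividing the lifted identity by $2$, I get
\begin{equation*}
\chi(M)=\tfrac{1}{2}\chi(\widetilde{M})=\tfrac{1}{2}\sum_k\chi(\widetilde{\Sigma_k})=\sum_j\chi(\Sigma_j).
\end{equation*}
The only place I expect any real care is the point-stratum sign check in the previous paragraph; everything else is bookkeeping on the formula already established.
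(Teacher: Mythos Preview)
Your argument is correct. It differs from the paper's own proof, which is much terser: the paper simply observes that by Theorem~\ref{TubularRiemFlowIsIsometric} the flow is isometric near each $\Sigma_j$, and then appeals to Kobayashi's classical result \cite{Kob1} that the formula $\chi(M)=\sum_j\chi(\Sigma_j)$ already holds for Killing fields. In other words, the paper outsources the residue computation to the literature on isometric flows.

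Your route instead plugs $\phi=\chi$ directly into Theorem~\ref{mainTheorem}, cancels the normal factors in $\chi(\Lambda_X)/\chi(\Lambda_X^\nu)$ to isolate $a_1\cdots a_{m_0}=e(T\Sigma_j)$, and then invokes Gauss--Bonnet on $\Sigma_j$. This is more self-contained and illustrates concretely how the main residue formula specializes, at the cost of a few more lines; your handling of the isolated-point sign convention and of the nonorientable case via the orientation double cover (using multiplicativity of $\chi$ under the degree-two cover $\pi^{-1}(\Sigma_j)\to\Sigma_j$) is fine.
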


\begin{proof}
From Theorem \ref{TubularRiemFlowIsIsometric}, the local calculation is the
same as it is for isometric flows, for which this formula is already known 
\cite{Kob1}.
\end{proof}

\begin{remark}
This formula was already known (\cite[Theorem C]{GalRad}) in the case where
the leaves are closed.
\end{remark}

Let $\left( M,\mathcal{F},g\right) $ be a possibly nonorientable
one-dimensional singular Riemannian foliation of a compact, oriented,
Riemannian manifold $M$ of dimension $4\ell $ whose singular stratum
consists of isolated points $p_{1},...,p_{\alpha }$. For each $j$, by
Theorem \ref{TubularRiemFlowIsIsometric}, there exists a Killing vector
field $X_{j}$ such that the restriction of $\mathcal{F}$ to a ball centered
at $p_{j}$ is the flow of $X_{j}$. We define the \textbf{index} $\varepsilon
_{j}$ of the singular point $p_{j}$ as follows. The isometric flow
corresponding to $X_{j} $ has the local form (\ref{formOfIsometricFlow}) on $%
\mathbb{C}^{2\ell }$ with constants $\alpha _{1},...,\alpha _{2\ell }$. We
define the sign 
\begin{equation*}
\varepsilon _{j}=\pm \prod_{i=1}^{2\ell }\mathrm{sgn}\left( \alpha
_{i}\right) ,
\end{equation*}%
where the $\pm $ is chosen depending on whether the orientation of $\mathbb{C%
}^{2\ell }$ agrees with the orientation of $M$ at $p_{j}$ or not. It is easy
to check that this number is independent of the choice of coordinates and of
the choice of $X_{j}$. Even if the orientation of $X_{j}$ is reversed, the
product $\prod_{i=1}^{2\ell }\mathrm{sgn}\left( \alpha _{i}\right) $ is
invariant.

\begin{corollary}
\label{SignatureCor}Let $\left( M,\mathcal{F},g\right) $ be a possibly
nonorientable one-dimensional singular Riemannian foliation of a compact,
oriented, Riemannian manifold $M$ of dimension $4\ell $ whose singular
stratum consists of isolated points. Then the signature satisfies $\sigma
\left( M\right) =\sum_{j}\varepsilon _{j}$, where the sum is over the
singular points and $\varepsilon _{j}$ is the index of the $j^{\text{th}}$
singular point.
\end{corollary}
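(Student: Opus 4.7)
The plan is to apply Theorem \ref{mainTheorem} (or Theorem \ref{nonorientableCase} in the nonorientable case) taking $\phi$ to be the degree-$\ell$ Hirzebruch $L$-polynomial in the Pontryagin forms, so that by the Hirzebruch signature theorem $\phi(M)=\sigma(M)$. The task then reduces to showing that at each isolated singular point $p_j$, the local residue $\phi(\Lambda_X)/\chi(\Lambda_X^\nu)[\Sigma_j]$ equals the index $\varepsilon_j$ defined in the paragraph preceding the corollary.

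First I would handle the orientable case. Since $\Sigma_j=\{p_j\}$ is zero-dimensional, every auxiliary form $a_i$ and every base Pontryagin/Euler class supported on $\Sigma_j$ vanishes, so the residue at $p_j$ reduces to a purely numerical expression depending only on the skeigen-values $\alpha_1,\ldots,\alpha_{2\ell}$ of $\Lambda_X$ at $p_j$ together with the comparison between the complex-structure orientation on $T_{p_j}M=E_1\oplus\cdots\oplus E_\tau$ and the ambient orientation of $M$. This is exactly the data that defines $\varepsilon_j$. By Corollary \ref{isometricModificationCorollary} and Theorem \ref{TubularRiemFlowIsIsometric}, after a metric modification on a tubular neighborhood of $p_j$ the vector field $X$ may be replaced by one generating a genuine isometric flow with the same weights $\alpha_i$; because the residue is a local invariant of the infinitesimal flow on $T_{p_j}M$, it must coincide with the corresponding residue appearing in the Baum--Cheeger formula for a globally isometric flow at an isolated fixed point. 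That residue is computed in \cite{BaumCh} (see also \cite{APS3}) to equal $\pm\prod_{i=1}^{2\ell}\mathrm{sgn}(\alpha_i)=\varepsilon_j$, with the sign determined by orientation. Summing over $j$ yields the orientable case.

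For the nonorientable case I would pass to the leafwise-orientation double cover $\pi\colon\widetilde{M}\to M$ of Lemma \ref{doubleCoverLemma} and apply Theorem \ref{nonorientableCase}. Each isolated singular point $p_j$ of $\mathcal{F}$ has two preimages $\widetilde{p}_j^{(1)},\widetilde{p}_j^{(2)}$, each an isolated singular point of $\widetilde{\mathcal{F}}$. Since $\pi$ is a local isometry that intertwines $T\widetilde{\mathcal{F}}$ with $T\mathcal{F}$ and preserves the ambient orientation, the skeigen-values and orientation sign at each preimage agree with those at $p_j$, so each preimage contributes $\varepsilon_j$. The factor $1/2$ in Theorem \ref{nonorientableCase} exactly cancels the doubling, yielding $\sigma(M)=\sum_j\varepsilon_j$.

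The main obstacle is the evaluation of the local residue at an isolated fixed point of an isometric flow, showing that the rational expression $\widehat{L}_\ell(\lambda_1,\ldots,\lambda_{2\ell})/(\lambda_1\cdots\lambda_{2\ell})$ collapses to the integer $\pm\prod_i\mathrm{sgn}(\alpha_i)$. Although classical, one must be careful with the normalization $\sum\alpha_i^2=1$ and with the orientation sign convention introduced just before Theorem \ref{mainTheorem}, in which one of the $\lambda_i$ is negated precisely when the complex-structure orientation disagrees with the manifold orientation; these conventions must be aligned with those of Baum--Cheeger so that the correct sign of $\varepsilon_j$ is produced in every case.
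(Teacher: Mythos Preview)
Your proposal is correct and matches the paper's intended argument. The paper gives no explicit proof for Corollary~\ref{SignatureCor}; it is stated as an immediate consequence of Theorem~\ref{mainTheorem} (resp.\ Theorem~\ref{nonorientableCase}) together with the classical Baum--Cheeger/Atiyah--Singer residue computation for the $L$-genus at an isolated fixed point, exactly as you outline, and the paper's remark that ``even if the orientation of $X_j$ is reversed, the product $\prod_{i=1}^{2\ell}\mathrm{sgn}(\alpha_i)$ is invariant'' is precisely what makes the index well defined and your double-cover argument go through.
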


This result is new for the case where $\left( M,\mathcal{F},g\right) $ is
not an isometric flow.

Below is an example where an orientable 4-manifold is endowed with a
one-dimensional, non-orientable singular Riemannian foliation.

\begin{example}
\label{nonorientable_example}Consider a nonorientable disk bundle over a
Klein bottle. More specifically, let $K=\mathbb{R}^{2}\diagup \left\langle
\tau ,\sigma \right\rangle $, where $\tau \left( x,y\right) =\left(
x,y+1\right) $, $\sigma \left( x,y\right) =\left( x+1,1-y\right) $. Consider 
$D\times \mathbb{R}^{2}$, where $D$ is the unit disk in $\mathbb{C}$. We
extend $K$ to be a disk bundle $B$ over $K$ by letting $B=D\times \mathbb{R}%
^{2}\diagup \left\langle \tau ,\sigma \right\rangle $, where $\tau \left(
z,x,y\right) =\left( z,x,y+1\right) $ and $\sigma \left( z,x,y\right)
=\left( \overline{z},x+1,1-y\right) $. Then $B$ is an orientable $4$%
-manifold with boundary $\partial B=\partial D\times \mathbb{R}^{2}\diagup
\left\langle \tau ,\sigma \right\rangle $. We foliate $B$ by circles of the
form $C_{r,x,y}=\left\{ \left( z,x,y\right) \diagup \left\langle \tau
,\sigma \right\rangle :\left\vert z\right\vert =r\right\} $ with $0\leq
r\leq 1$. Now we glue two copies of $B$ with opposite orientation together
to form and orientable $4$-manifold $M=B\sqcup _{\partial B}B^{\prime }$,
which can be endowed with a metric such that the circles $C_{r,x,y}$ form a
singular Riemannian foliation that is not orientable. Since the double cover 
$\widetilde{M}$ is diffeomorphic to $T^{2}\times S^{2}$, all characteristic
numbers of $M$ are zero.
\end{example}

\end{document}